\newtheorem{thm}{Theorem}[section]
\theoremstyle{definition}
\newtheorem{dfn}[thm]{Definition}
\theoremstyle{plain}
\newtheorem{lem}[thm]{Lemma}
\theoremstyle{plain}
\theoremstyle{definition}
\newtheorem{conj}[thm]{Conjecture}
\theoremstyle{definition}
\newcommand{\Uf}{\mathcal{U}}
\newcommand{\KS}{\mathrm{KS}}
\newcommand{\FPT}{\mathrm{FPT}}
\title[{Arrow}'s theorem, ultrafilters, and reverse mathematics]%
    {{Arrow}'s theorem, ultrafilters, \\ and reverse mathematics}
\author{Benedict Eastaugh}
\address{%
    Department of Philosophy \\
    University of Warwick \\
    Coventry CV4 7AL \\
    UK
}
\email{benedict@eastaugh.net}
\date{26 February 2024}
\subjclass[2020]{Primary 03B30, 03F35, 91B12, 91B14}
\begin{document}

\begin{abstract}
    This paper initiates the reverse mathematics of social choice theory,
    studying Arrow's impossibility theorem and related results including
    Fishburn's possibility theorem and the Kirman--Sondermann theorem within the
    framework of reverse mathematics. We formalise fundamental notions of social
    choice theory in second-order arithmetic, yielding a definition of countable
    society which is tractable in $\RCA_0$. We then show that the
    Kirman--Sondermann analysis of social welfare functions can be carried out
    in $\RCA_0$. This approach yields a proof of Arrow's theorem in $\RCA_0$,
    and thus in $\PRA$, since Arrow's theorem can be formalised as a $\Pi^0_1$
    sentence. Finally we show that Fishburn's possibility theorem for countable
    societies is equivalent to $\ACA_0$ over $\RCA_0$.
\end{abstract}

\maketitle


\section{Introduction}
\label{sec:intro}

Arrow's 1950 impossibility theorem \cite{Arrow1950, Arrow1963} is a foundational
result in social choice theory. If a society contains only finitely many voters,
then any aggregation of individual preference orderings (called a \emph{social
welfare function}) respecting Arrow's conditions of unanimity and independence
of irrelevant alternatives is dictated by a single voter.
The theorem therefore appears to place substantial limits on the existence of
methods for social decision-making that are fair, rational, and uniform. It has
a wide range of applicability including the problems of selecting candidates in
elections, deciding on public policies, and choosing between rival scientific
theories. As such it has exerted a substantial influence on economics
\cite{Sen1970a, Hammond1976}, political science \cite{Riker1982}, and philosophy
\cite{Hurley1985, Okasha2011a}.

Although Arrow's theorem is essentially a result in finitary combinatorics,
later developments in social choice theory in the 1970s brought in more powerful
methods such as non-principal ultrafilters, which Fishburn~\cite{Fishburn1970}
used to show that infinite societies have non-dictatorial social welfare
functions. This result and others like it have led mathematical economists to
grapple with non-constructivity and applications of the axiom of choice
\cite{Litak2018}.
However, for economically and philosophically relevant models such as societies
which are countable or continuous, reverse mathematics offers a more appropriate
framework for gauging where (and what) non-constructive set existence axioms are
actually necessary in social choice theory.

This paper initiates the reverse mathematics of social choice theory, studying
Arrow's impossibility theorem and related results including Fishburn's
possibility theorem within the framework of reverse mathematics. By defining
fundamental notions of social choice theory in second-order arithmetic, we show
that an influential analysis of social welfare functions in terms of
ultrafilters by Kirman and Sondermann~\cite{KirmanSondermann1972} can be carried
out in $\RCA_0$. This allows us to establish that Arrow's theorem, when
formalised as a statement of first-order arithmetic, is provable in primitive
recursive arithmetic. Fishburn's possibility theorem, on the other hand, uses
non-constructive resources in an essential way, and we prove that its
restriction to countable societies is equivalent to $\ACA_0$.


In the classical Arrovian framework, a \emph{society} $\mathcal{S}$ consists of
a set $V$ of \emph{voters}, a set $X$ of \emph{alternatives} (or candidates),
together with the set $W$ of all weak orders of $X$ (representing the different
ways in which the set of alternatives can be rationally ordered), a set
$\mathcal{A}$ of \emph{coalitions} of voters, and a set $\mathcal{F}$ of
\emph{profiles}, i.e.\ functions $f : V \to W$ representing different elections
or voting scenarios.
In Arrow's framework, $\mathcal{A}$ and $\mathcal{F}$ satisfy a condition known
as \emph{unrestricted domain} (or \emph{universal domain}), meaning that
$\mathcal{A} = \powset{V}$ and $\mathcal{F} = W^V$, the set of all functions
$f : V \to W$.

Given alternatives $x,y \in X$, a profile $f : V \to W$, and a voter $v \in V$,
we write
\begin{equation*}
    x \lesssim_{f(v)} y
\end{equation*}
to mean that voter $v$ ranks $x$ at least as highly $y$ under profile $f$, and
\begin{equation*}
    x <_{f(v)} y
\end{equation*}
to mean that voter $v$ strictly prefers $x$ to $y$ under profile $f$.
A \emph{social welfare function} $\sigma$ for a society $\mathcal{S}$ maps
profiles in $\mathcal{F}$ to weak orders in $W$, and represents one way of
consistently aggregating individual preference orderings into an overall social
preference ordering. We write
\begin{equation*}
    x \lesssim_{\sigma(f)} y
\end{equation*}
to mean that the social welfare function $\sigma$ ranks $x$ at least as highly
as $y$ under profile $f$, and similarly for $x <_{\sigma(f)} y$.
If $R$ is a weak ordering and $Y \subseteq X$, we write $R\restr{Y}$ to mean
$R \cap Y^2$.
This lets us state Arrow's conditions more precisely.
\begin{enumerate}
    \item Unanimity:
        If $x <_{f(v)} y$ for all $v \in V$, then $x <_{\sigma(f)} y$.
    \item Independence of irrelevant alternatives:
        If $f(v)\restr{\{x,y\}} = g(v)\restr{\{x,y\}}$ for all $v \in V$ then
        $\sigma(f)\restr{\{x,y\}} = \sigma(g)\restr{\{x,y\}}$.
    \item Non-dictatoriality:
        There is no $d \in V$ such that for all $f \in \mathcal{F}$,
        if $x <_{f(d)} y$ then $x <_{\sigma(f)} y$.
\end{enumerate}

\begin{thm}[Arrow's impossibility theorem]
    \label{thm:Arrow}
    Suppose $\mathcal{S} = \str{V,X,\mathcal{A},\mathcal{F}}$ is a society
    satisfying unrestricted domain such that $V$ is a nonempty and finite set of
    voters, and $X$ is a finite set of alternatives with $\lh{X} \geq 3$.
    Then there exists no social welfare function $\sigma : \mathcal{F} \to W$
    satisfying unanimity, independence, and non-dictatoriality.
\end{thm}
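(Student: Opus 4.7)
The plan is to deduce Arrow's theorem directly from the Kirman--Sondermann analysis of social welfare functions, which has been carried out in $\RCA_0$ earlier in the paper. Suppose for contradiction that $\sigma : \mathcal{F} \to W$ satisfies unanimity and independence of irrelevant alternatives; I will exhibit a dictator $d \in V$, contradicting non-dictatoriality.

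By the Kirman--Sondermann theorem, the collection $\Uf_\sigma$ of decisive coalitions of $\sigma$ forms an ultrafilter on $V$, where $C \subseteq V$ is \emph{decisive} if for any distinct $x,y \in X$ and any profile $f$, having $x <_{f(v)} y$ for all $v \in C$ entails $x <_{\sigma(f)} y$. Since $V$ is nonempty and finite, $\powset{V}$ and $\Uf_\sigma$ are both coded as finite sets in $\RCA_0$, so the intersection $D = \bigcap \Uf_\sigma$ is likewise available. Iterating closure under binary intersection along an enumeration of $\Uf_\sigma$ gives $D \in \Uf_\sigma$, and $D \ne \emptyset$ since $\emptyset \notin \Uf_\sigma$. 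Fix any $d \in D$; if $\{d\} \notin \Uf_\sigma$, then maximality gives $V \setminus \{d\} \in \Uf_\sigma$, whence $d \in D \subseteq V \setminus \{d\}$, a contradiction. Hence $\{d\}$ is a decisive coalition, and unwinding the definition shows that $d$ is a dictator for $\sigma$.

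The real difficulty is hidden in the prior Kirman--Sondermann step: one must show in $\RCA_0$ that the decisive coalitions are well-defined independently of the pair $(x,y)$ and that they satisfy the ultrafilter axioms. This is the classical contagion-style argument across triples of alternatives, and it is precisely where the hypothesis $\lh{X} \geq 3$ enters. Once that analysis is in place, the reduction above is pure bookkeeping on a finite code, so the entire proof can be performed by bounded primitive recursion in $\RCA_0$. Combined with the observation that the statement is $\Pi^0_1$ (quantifying universally over finite objects with arithmetically checkable conditions), this yields provability in $\PRA$ as well.
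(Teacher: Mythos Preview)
Your proposal is correct and takes essentially the same approach as the paper: invoke the Kirman--Sondermann theorem to obtain the ultrafilter $\Uf_\sigma$ of decisive coalitions, then use finiteness of $V$ to conclude $\Uf_\sigma$ is principal and hence $\sigma$ dictatorial. The paper records this as a one-line consequence of theorem~\ref{thm:KS_ZF} in the introduction and repeats it formally for the $\RCA_0$ version in theorem~\ref{thm:arrow_RCA0}; your explicit argument for principality via $D = \bigcap \Uf_\sigma$ and your closing remarks on $\RCA_0$/$\PRA$ simply spell out what the paper leaves implicit or defers to lemma~\ref{lem:uf_basic} and the discussion after theorem~\ref{thm:arrow_RCA0}.
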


Fishburn~\cite{Fishburn1970} offered a way out of Arrow's impossibility result,
showing that Arrow's conditions are consistent if we drop the requirement that
$V$ is finite.%
\footnote{%
The result was apparently already known to Julian Blau in 1960, although Blau
never published his proof \cite[p.~16]{Fishburn1987}. It should therefore
perhaps be called the Blau--Fishburn possibility theorem, as suggested in
\cite[p.~283]{Saari1991}.
}

\begin{thm}[Fishburn's possibility theorem]
    \label{thm:FPT_ZFC}
    Suppose $\mathcal{S} = \str{V,X,\mathcal{A},\mathcal{F}}$ is a society
    satisfying unrestricted domain such that $V$ is an infinite set of voters,
    and $X$ is a finite set of alternatives with $\lh{X} \geq 3$.
    Then there exists a social welfare function $\sigma : \mathcal{F} \to W$
    satisfying unanimity, independence, and non-dictatoriality.
\end{thm}

Infinite societies are widely used in mathematical economics \cite{Aumann1964,
Hildenbrand1970, Hammond1979, HeSunSun2017}.%
\footnote{%
Schmitz~\cite[p.~193]{Schmitz1977} writes that
``measure spaces $(V, \mathcal{V}, \mu)$ of infinitely many agents with
$\mu$-atoms [are] of some interest since these spaces can serve as models for
large economies with preformed coalitions (e.g., religious, regional or social
groups) and/or with powerful companies or political parties''.
See also the introductory discussion of countably infinite societies in
\cite{Mihara1997}, and the references on population ethics for infinite
societies in \S 7.4 of \cite{EaswaranHajekMancosu2021}.
}
Fishburn's theorem is therefore of antecedent interest in its application
domain, despite the prima facie implausibility of infinite `societies'.

On a mathematical level, Fishburn's possibility theorem is best understood in
the context of an influential analysis by Kirman and Sondermann~%
\cite{KirmanSondermann1972} which shows that social welfare functions satisfying
unanimity and independence correspond to ultrafilters.
Arrow had already introduced the notion of a \emph{$\sigma$-decisive coalition}
for a social welfare function $\sigma$: a set $C \subseteq V$ such that if
$x <_{f(v)} y$ for every $v \in C$, then $x <_{\sigma(f)} y$. Kirman and
Sondermann established that the collection of all $\sigma$-decisive coalitions
forms an ultrafilter which is principal if and only if $\sigma$ is dictatorial.

\begin{thm}[Kirman--Sondermann theorem]
    \label{thm:KS_ZF}
    Suppose $\mathcal{S} = \str{V,X,\mathcal{A},\mathcal{F}}$ is a society
    satisfying unrestricted domain such that $V$ is a nonempty set of voters,
    and $X$ is a finite set of alternatives with $\lh{X} \geq 3$. For any social
    welfare function $\sigma : \mathcal{F} \to W$ satisfying unanimity and
    independence, the set
    \begin{equation*}
        \Uf_\sigma = \{ C \in \mathcal{A} : \text{$C$ is $\sigma$-decisive} \},
    \end{equation*}
    forms an ultrafilter on $\mathcal{A}$ which is principal if and only if
    $\sigma$ is dictatorial.
\end{thm}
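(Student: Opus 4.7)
The plan is to verify the three parts of the statement in turn: that $\Uf_\sigma$ is a filter, that it is an ultrafilter, and that its principality coincides with dictatorship of $\sigma$. Several of the required conditions are immediate. That $V \in \Uf_\sigma$ is unanimity. That $\emptyset \notin \Uf_\sigma$ follows because a decisive empty coalition would vacuously force $x <_{\sigma(f)} y$ for every pair and every profile, contradicting $\sigma(f) \in W$. Upward closure is direct from the definition: any unanimous preference on $D$ is inherited by every subset $C \subseteq D$. For the principality clause, a dictator $d$ makes $\{d\}$ decisive, and the ultrafilter property then forces $\Uf_\sigma = \{C \in \mathcal{A} : d \in C\}$; conversely, a principal ultrafilter on $V$ is generated by a singleton $\{d\}$, whose decisiveness is precisely dictatorship.

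The substantive work is closure under finite intersection and the ultrafilter dichotomy: for every $C \subseteq V$, either $C$ or $V \setminus C$ lies in $\Uf_\sigma$. Both go through the classical \emph{field expansion lemma}: under unanimity, IIA, and $|X| \geq 3$, if $C$ is almost-decisive for a single pair $(x, y)$---meaning that there is a profile in which $C$ unanimously prefers $x$ to $y$, $V \setminus C$ unanimously prefers $y$ to $x$, and $\sigma$ strictly prefers $x$ to $y$---then $C$ is decisive for all pairs. I would prove this lemma by taking an arbitrary pair $(a, b)$ and a profile $f$ with $C$ unanimous for $a$ over $b$, then, case by case on how $(a, b)$ overlaps with $(x, y)$, building an auxiliary profile with a third alternative $z$ in which $C$ ranks the relevant alternatives as $a \succ z \succ b$ and $V \setminus C$ is configured so that almost-decisiveness can be invoked on one edge of the triple and unanimity on another; transitivity of the social weak order then chains these conclusions into $\sigma$'s strict preference for $a$ over $b$, and IIA transfers the outcome back to $f$.

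With the field expansion lemma in hand, the remaining two steps follow from single three-alternative constructions. For the dichotomy: given $C \subseteq V$ and distinct $a, b, c \in X$, take the profile where $C$ ranks $a \succ b \succ c$ and $V \setminus C$ ranks $c \succ a \succ b$; unanimity forces $\sigma$ to prefer $a$ to $b$, and then case analysis on $\sigma$'s ordering of $\{a, c\}$---with transitivity propagating the consequence to $\{b, c\}$---displays one of $C$ or $V \setminus C$ as almost-decisive for some pair. For intersection closure, use the four cells of the partition of $V$ induced by $C_1$ and $C_2$, assigning each cell a rotation of the chain on $\{a, b, c\}$ so that the decisiveness of $C_1$ forces one relation in $\sigma$, that of $C_2$ forces another, and transitivity of $\sigma$ yields the almost-decisiveness of $C_1 \cap C_2$ on the third pair. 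The main obstacle is the field expansion lemma itself, whose proof must handle the several ways an arbitrary pair can meet the distinguished pair $(x, y)$; it is exactly here that the hypothesis $|X| \geq 3$ is indispensable.
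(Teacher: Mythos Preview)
Your proposal is correct and follows essentially the same route as the paper's argument (carried out for countable societies in \S\ref{sec:ks_arrow}): your field expansion lemma is the content of Lemma~\ref{lem:uf_defin} together with part~(i) of Theorem~\ref{thm:KS_RCA0}, and your two-cell and four-cell profile constructions for maximality and intersection closure match those in Lemma~\ref{lem:uf_sigma_ultrafilter}. The only organisational difference is that you take $\Uf_\sigma$ to consist of decisive coalitions from the outset and use field expansion to promote almost-decisiveness to full decisiveness inside each step, whereas the paper first shows that the \emph{almost}-decisive coalitions form an ultrafilter and only afterwards proves the two notions coincide---a choice driven by definability concerns in $\RCA_0$ that are irrelevant in the classical setting, and which incidentally makes your upward-closure argument a one-liner where the paper needs a three-cell construction.
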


Arrow's theorem is an immediate consequence of the Kirman--Sondermann theorem:
as every ultrafilter on a finite set is principal and hence generated by a
singleton $\{ d \}$, any social welfare function for a society with a finite set
$V$ of voters must be dictatorial.
The Kirman--Sondermann theorem also provides us with our first reverse
mathematics-style result. Since it is provable in $\ZF$, any non-dictatorial
social welfare function $\sigma$ for a society with an infinite set $V$ of
voters will give rise to a non-principal ultrafilter $\Uf_\sigma$ on
$\powset{V}$.

\begin{thm}
    \label{thm:FPT_equiv_NPU}
    Fishburn's possibility theorem is equivalent over $\ZF$ to the statement
    that for every infinite set $V$ there exists a non-principal ultrafilter on
    $\powset{V}$.
\end{thm}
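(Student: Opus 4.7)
The plan is to establish both directions of the equivalence using tools already in hand: the Kirman--Sondermann theorem (Theorem~\ref{thm:KS_ZF}) for the forward implication, and the classical ultrafilter construction of a social welfare function for the converse.

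For the forward direction, suppose Fishburn's possibility theorem holds and let $V$ be an infinite set. I would apply the theorem to the society with voter set $V$ and any three-element alternative set $X$, obtaining a social welfare function $\sigma : W^V \to W$ satisfying unanimity, independence, and non-dictatoriality. Theorem~\ref{thm:KS_ZF} then guarantees that $\Uf_\sigma$ is an ultrafilter on $\powset{V}$, and it is non-principal precisely because $\sigma$ is non-dictatorial. So this direction reduces entirely to quoting the Kirman--Sondermann analysis.

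For the converse, I would fix an infinite $V$, a finite $X$ with $\lh{X} \geq 3$, and a non-principal ultrafilter $\Uf$ on $\powset{V}$, and define a social welfare function $\sigma : W^V \to W$ by
\[
    x \lesssim_{\sigma(f)} y \iff \{v \in V : x \lesssim_{f(v)} y\} \in \Uf.
\]
Reflexivity and transitivity of $\sigma(f)$ follow from $V \in \Uf$ together with the closure of $\Uf$ under finite intersection and supersets; totality needs the full ultrafilter property, since for any $x,y \in X$ the union $\{v : x \lesssim_{f(v)} y\} \cup \{v : y \lesssim_{f(v)} x\}$ equals $V$, forcing one of the two sets to lie in $\Uf$. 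Unanimity and independence are then immediate from the definition of $\sigma$. For non-dictatoriality, if some $d \in V$ were a dictator, I would choose a profile $f$ in which $d$ alone strictly prefers $x$ to $y$; the dictator condition would force $\{v : y \lesssim_{f(v)} x\} = V \setminus \{d\} \notin \Uf$, whence $\{d\} \in \Uf$, contradicting non-principality.

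The only step demanding real care is the weak-order verification in the converse, where totality genuinely leverages the ultrafilter property rather than merely the filter axioms; everything else is routine bookkeeping, and the forward direction is almost immediate once Theorem~\ref{thm:KS_ZF} is at hand.
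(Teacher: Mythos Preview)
Your proposal is correct and matches the paper's approach. The paper does not give a standalone proof of this theorem: it is stated in the introduction, with the forward direction justified by the preceding sentence invoking the Kirman--Sondermann theorem, and the converse left implicit as Fishburn's original ultrafilter construction. Your argument fills in exactly these two halves, and the ultrafilter-to-social-welfare-function construction you describe is the same one the paper later formalises in detail (for the countable setting) in Lemma~\ref{lem:cKS_RCA0}.
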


The existence of non-principal ultrafilters is unprovable in $\ZF$
\cite{Blass1977}, but is (strictly) implied by the axiom of choice
\cite{Jech1973, PincusSolovay1977}.
Many therefore consider Fishburn's possibility theorem to be highly
non-constructive \cite{Mihara1999, BrunnerMihara2000, CampbellKelly2002,
Cato2017}. At least prima facie, this is a substantial problem for any genuine
application of Fishburn's possibility theorem in social choice theory, a field
which is supposed to apply to everyday social decision-making processes such as
national elections or votes in a hiring committee.%
\footnote{%
For a detailed discussion in this vein see \S\S 2--3 of \cite{Litak2018}.
}
This kind of concern with applicability lies behind a wide range of studies of
Arrow's theorem using tools from computability theory and computational
complexity theory. Amongst the former are the work of
Lewis~\cite{Lewis1988} in the 1980s and Mihara~\cite{Mihara1997, Mihara1999} in
the 1990s, while the latter is the preserve of the flourishing field of
computational social choice theory \cite{ChevaleyreEndrissLang2007,
BrandtConitzerEndriss2016}.

Lewis~\cite{Lewis1988} worked principally with a notion of ``recursively
enumerable society'' in which $V = \omega$, the algebra of coalitions
$\mathcal{A}$ is restricted to include only computably enumerable sets, and the
set $\mathcal{F}$ of profiles is restricted to include only computable
functions. The set $X$ of alternatives must be have at least 3 elements, and be
at most countably infinite.
Lewis proved a weak version of Arrow's theorem for such societies, showing that
if $\sigma$ is a computable social welfare function for a recursively enumerable
society $\mathcal{S}$, then for each profile $f \in \mathcal{F}$ there exists a
`dictator' $d$ such that for all $x,y \in X$, if $x <_{f(d)} y$, then
$x <_{\sigma(f)} y$. This `dictator' is not necessarily unique across all
profiles, and hence not a dictator in Arrow's original sense.%
\footnote{%
For a more detailed appraisal of Lewis's framework and results, see appendix~F
of Mihara's dissertation~\cite{Mihara1995}.
}

Mihara's approach in \cite{Mihara1997} is somewhat different, working with a
single society $\mathcal{S}$ in which $V = \omega$, and the coalition algebra
$\mathcal{A}$ is precisely the set $\REC$ of all computable sets. Mihara allows
a broader range of profiles in $\mathcal{F}$, namely those which are measurable
by sets in $\REC$.%
\footnote{%
Measurable profiles are introduced at the start of \S\ref{sec:societies}.
}
The set of alternatives $X$ can be any set with at least 3 elements, although
the computability requirements mean that only countably many alternatives will
actually end up being considered by any given social welfare function.
Unlike Lewis, Mihara defines a dictator as Arrow does: a single individual whose
preferences determine the social ordering across all profiles.
Mihara proves that any computable non-dictatorial social welfare function for
the society based on the coalition algebra $\REC$ must compute $\zeroj$.
The recursive counterexample which we give to Fishburn's possibility theorem at
the end of \S\ref{sec:fpt} improves on Mihara's result by constructing a
countable society which does not contain all computable sets as coalitions, and
can be coded as a single computable set, but all of whose non-dictatorial social
welfare functions compute $\zeroj$.
In~\cite{Mihara1999}, Mihara shows that there exist non-dictatorial social
welfare functions for this society which are computable relative to $\zerojj$.%
\footnote{%
A natural question left open by \cite[p.~270]{Mihara1999} is whether there exist
non-dictatorial social welfare functions for Mihara's society which are
computable relative to $\zeroj$. A generalisation of this question is discussed
at the end of \S\ref{sec:fpt}.
}

The aim of this paper is to provide a more nuanced analysis of the situation
regarding Arrow's theorem, Fishburn's theorem, and their relative
(non-)constructivity in terms of the hierarchy of subsystems of second-order
arithmetic studied in reverse mathematics. After briefly introducing the
relevant background from reverse mathematics and social choice theory in
\S\ref{sec:prelims}, we present a canonical sequence of definitions in
\S\ref{sec:societies} for investigating the proof-theoretic strength of theorems
in social choice theory. This investigation begins with Arrow's impossibility
theorem and Fishburn's possibility theorem, but the framework is sufficiently
general and flexible to accommodate future research on other landmark results in
social choice theory such as the Gibbard--Satterthwaite theorem
\cite{Gibbard1973, Satterthwaite1975}.

The central definition is that of a \emph{countable society}: a structure
$\mathcal{S} = \str{V,X,\mathcal{A},\mathcal{F}}$ in which $V \subseteq \N$, and
the algebra of coalitions $\mathcal{A} \subseteq \powset{V}$ and the set of
profiles $\mathcal{F} \subseteq W^V$ are both countable. Key to this definition
and to the results in the paper are conditions on $\mathcal{A}$ and
$\mathcal{F}$ called \emph{uniform measurability} and \emph{quasi-partition
embedding} that ensure their richness and relative compatibility, and which are
substantially weaker than previously proposed alternatives to Arrow's
unrestricted domain condition.
Using this framework we prove the following results.

\begin{thm}
    Arrow's impossibility theorem is provable in $\RCA_0$.
\end{thm}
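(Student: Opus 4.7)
The plan is to derive Arrow's theorem as an immediate corollary of the $\RCA_0$-version of the Kirman--Sondermann theorem established earlier in the paper. First I would confirm that when $V$ and $X$ are finite, the whole structure $\mathcal{S} = \str{V,X,\mathcal{A},\mathcal{F}}$ (with $\mathcal{A} = \powset{V}$ and $\mathcal{F} = W^V$) is coded by a single natural number, as is any candidate social welfare function $\sigma : \mathcal{F} \to W$. Under this coding, the statement of Theorem~\ref{thm:Arrow} unfolds to a $\Pi^0_1$ assertion: for every code of such a society and every code of a SWF satisfying unanimity and independence, the decidable predicate ``some voter is a dictator'' holds.

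Working in $\RCA_0$, I would then argue by contradiction. Fix a finite society $\mathcal{S}$ meeting the hypotheses and a SWF $\sigma$ satisfying unanimity and independence. By the $\RCA_0$-provable Kirman--Sondermann theorem, the set
\begin{equation*}
    \Uf_\sigma = \{ C \in \mathcal{A} : \text{$C$ is $\sigma$-decisive} \}
\end{equation*}
is an ultrafilter on the finite Boolean algebra $\powset{V}$. A standard fact, easily verified in $\RCA_0$, is that any ultrafilter on a finite power-set algebra is principal: enumerating $V = \{v_1, \ldots, v_n\}$ and iterating the ultrafilter splitting property $A \cup B \in \Uf_\sigma$ implies $A \in \Uf_\sigma$ or $B \in \Uf_\sigma$ via a bounded induction, one locates a singleton $\{d\} \in \Uf_\sigma$. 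The ``principal iff dictatorial'' clause of Kirman--Sondermann then gives that $d$ is a dictator for $\sigma$, contradicting non-dictatoriality.

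I do not expect a major obstacle here, as the substantive work will have been done by the $\RCA_0$ version of Kirman--Sondermann. The only care needed is to verify that the finite-case extraction of a generating singleton from $\Uf_\sigma$ requires no set existence beyond $\Delta^0_1$-comprehension and no induction beyond $\Sigma^0_1$, which is routine since $V$ is finite and every relevant quantifier is bounded. A pleasant by-product, noted in the abstract, is that because the formalised statement of Arrow's theorem is $\Pi^0_1$, the $\Pi^0_2$-conservativity of $\RCA_0$ over $\PRA$ immediately transfers the result into primitive recursive arithmetic.
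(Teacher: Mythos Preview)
Your proposal is correct and follows the same route as the paper: apply the $\RCA_0$ version of the Kirman--Sondermann theorem to obtain the ultrafilter $\Uf_\sigma$, observe that it must be principal because $V$ is finite (the paper packages this as part~4 of lemma~\ref{lem:uf_basic}), and conclude dictatorship via the ``principal iff dictatorial'' clause. The only cosmetic difference is that the paper's formal statement treats a finite society as a countable society with finite $V$ (so $\mathcal{A}$ and $\mathcal{F}$ are still countable sequences) rather than coding the entire structure as a single number upfront; the subsequent remark about the $\Pi^0_1$ formalisation and conservativity over $\PRA$ appears in the paper immediately after the proof, just as in your sketch.
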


In \S\ref{sec:ks_arrow} we establish that the Kirman--Sondermann analysis of
social welfare functions for countable (and hence finite) societies in terms of
ultrafilters of decisive coalitions can be formalised in $\RCA_0$. It follows
that Arrow's impossibility theorem is also provable in $\RCA_0$.
Moreover, by replacing finite sets with their codes, Arrow's theorem can be
formalised as a $\Pi^0_1$ sentence which is provable in $\PRA$.

\begin{thm}
    Fishburn's possibility theorem for countable societies is equivalent
    over $\RCA_0$ to the axiom scheme of arithmetical comprehension.
\end{thm}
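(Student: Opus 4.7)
The plan is to prove the equivalence by establishing both directions separately, with the Kirman--Sondermann correspondence (Theorem~\ref{thm:KS_ZF}) serving as the bridge between social welfare functions and ultrafilters on the coalition algebra. Since this correspondence is to be formalised in $\RCA_0$, the problem essentially reduces to the reverse mathematics of non-principal ultrafilters on countable Boolean algebras.

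For the forward direction, I would work in $\ACA_0$ and, given a countable society $\mathcal{S} = \str{V,X,\mathcal{A},\mathcal{F}}$ satisfying the hypotheses of Fishburn's theorem, construct a non-principal ultrafilter $\Uf$ on the countable coalition algebra $\mathcal{A}$ by stagewise diagonalisation. Enumerate $\mathcal{A} = \{A_0, A_1, \ldots\}$ and begin from the restriction to $\mathcal{A}$ of the Fréchet filter on $V$; at stage $n$ decide whether to add $A_n$ or its complement, the decision being guided by whether the intersection of the current finite conjunction with the candidate set is infinite, an arithmetical property of the enumeration. The resulting $\Uf$ exists as a set in $\ACA_0$. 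Feeding $\Uf$ into the $\RCA_0$ formalisation of the Kirman--Sondermann construction yields a social welfare function $\sigma$ whose ultrafilter of decisive coalitions is $\Uf$; since $\Uf$ is non-principal, $\sigma$ is non-dictatorial, and unanimity and independence hold by construction.

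For the reverse direction, I would work in $\RCA_0$, assume Fishburn's possibility theorem for countable societies, and derive the range principle for injections, which is equivalent over $\RCA_0$ to $\ACA_0$. Given an injection $f : \N \to \N$, define $A_n = \{m \in \N : (\exists k \leq m)\, f(k) = n\}$, which is cofinite if $n \in \mathrm{range}(f)$ and empty otherwise; the sequence $(A_n)_n$ exists in $\RCA_0$ by bounded comprehension. Build a countable society $\mathcal{S}_f$ with $V = \N$, a three-element alternative set $X$, coalition algebra $\mathcal{A}$ generated by all singletons together with the sets $A_n$, and a profile set $\mathcal{F}$ engineered to satisfy the uniform measurability and quasi-partition embedding conditions from \S\ref{sec:societies}. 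Apply Fishburn's theorem to obtain a non-dictatorial $\sigma$ for $\mathcal{S}_f$, then Kirman--Sondermann in $\RCA_0$ to obtain the associated non-principal ultrafilter $\Uf_\sigma$ on $\mathcal{A}$. Because $\mathcal{A}$ contains every singleton, non-principality of $\Uf_\sigma$ forces it to avoid every finite set in $\mathcal{A}$ and hence to contain every cofinite member; therefore
\begin{equation*}
    n \in \mathrm{range}(f) \iff A_n \in \Uf_\sigma,
\end{equation*}
giving a $\Delta^0_1$ definition of $\mathrm{range}(f)$ from $\Uf_\sigma$ and the enumeration of $\mathcal{A}$, as required.

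The main obstacle I anticipate is the design of $\mathcal{S}_f$ in the reverse direction: the coalition algebra and profile set must simultaneously be arithmetically definable from $f$, be mutually compatible under the paper's measurability and embedding conditions so that Kirman--Sondermann applies, and not admit an a priori dictator that would make the Fishburn hypothesis unhelpful. Verifying that the chosen $\mathcal{F}$ is rich enough for the correspondence yet spartan enough to leave $\Uf_\sigma$ meaningfully constrained by the $A_n$ is the delicate part of the encoding. By comparison, the forward construction is routine once one confirms that the stagewise decisions about Fréchet extension lie within the reach of $\ACA_0$.
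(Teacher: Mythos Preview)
Your proposal is correct and follows essentially the same strategy as the paper: both directions pass through the Kirman--Sondermann correspondence (theorem~\ref{thm:KS_RCA0} and its converse, lemma~\ref{lem:cKS_RCA0}), reducing the problem to the equivalence of $\ACA_0$ with the existence of non-principal ultrafilters on countable atomic algebras. The only difference is packaging: the paper cites this last equivalence as the black box lemma~\ref{lem:uf_exist_equiv_ACA0} and applies it to an \emph{arbitrary} atomic algebra, whereas you inline the Kirby-style encoding (building a specific algebra from the sets $A_n$ attached to an injection $f$) that underlies the proof of that lemma---indeed the paper sketches exactly this encoding in the closing remarks of \S\ref{sec:fpt}. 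Your anticipated obstacle, constructing a compatible $\mathcal{F}$ from a given $\mathcal{A}$, is precisely what lemma~\ref{lem:partition_embedding} provides.
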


This shows that Fishburn's possibility theorem requires the same set existence
principles for its proof as theorems of classical analysis like the
Bolzano--Weierstrass theorem, and combinatorial principles like König's
infinity lemma or Ramsey's theorem $\mathrm{RT}^n_k$ for $n \geq 2$ and
$k \geq 3$.
\S\ref{sec:fpt} is devoted to proving this equivalence, which can be seen as an
analogue in second-order arithmetic of theorem~\ref{thm:FPT_equiv_NPU} above.
This result can also be understood as generalising the results of Lewis and
Mihara discussed above to the broader class of countable societies introduced in
\S\ref{sec:societies}.


\section{Preliminaries}
\label{sec:prelims}

This section provides a brief overview of
subsystems of second-order arithmetic (\S\ref{sec:subsystems}),
ultrafilters on countable algebras of sets (\S\ref{sec:algebras}),
and weak orders in social choice theory (\S\ref{sec:orders}).


\subsection{Subsystems of second-order arithmetic}
\label{sec:subsystems}

Reverse mathematics is a subfield of mathematical logic devoted to determining
the set existence principles necessary to prove theorems of ordinary
mathematics, including real and complex analysis, countable algebra, and
countable infinitary combinatorics. This is done by formalising the theorems
concerned in the language of second-order arithmetic, and proving equivalences
between those formalisations and systems located in a well-understood hierarchy
of set existence principles. The equivalence proofs are carried out in a weak
base theory known as $\RCA_0$, which roughly corresponds to computable
mathematics and is briefly described below.
For details of the material in this subsection we refer readers to Simpson's
reference work \emph{Subsystems of Second Order Arithmetic}~\cite{Simpson2009},
Dzhafarov and Mummert's textbook
\emph{Reverse Mathematics}~\cite{DzhafarovMummert2022}, and
Hirschfeldt's monograph \emph{Slicing the Truth}~\cite{Hirschfeldt2014}.

\emph{Second-order arithmetic} $\lang_2$ is a two-sorted formal language, with
\emph{number variables} $x_1,x_2,\dotsc$ whose intended range is the natural
numbers $\N$, and \emph{set variables} $X_1,X_2,\dotsc$ whose intended range is
the powerset of the natural numbers $\powset{\N}$. The non-logical symbols are
those of Peano arithmetic ($0,1,+,\times,<$) plus the $\in$ symbol for set
membership. The atomic formulas of $\lang_2$ are those of the form $t_1 = t_2$,
$t_1 < t_2$, and $t_1 \in X_1$, where $t_1,t_2$ are number terms and $X_1$ is a
set variable. As well as the usual logical connectives, it contains both
\emph{number quantifiers} (sometimes called first-order quantifiers)
$\forall{x}$ and $\exists{x}$, and \emph{set quantifiers} (sometimes called
second-order quantifiers) $\forall{X}$ and $\exists{X}$. Formulas of $\lang_2$
are built up from atomic formulas using logical connectives and set and number
quantifiers.

The base theory $\RCA_0$ has three sets of axioms: the basic arithmetical
axioms, the $\Sigma^0_1$ induction scheme, and the recursive comprehension axiom
scheme. The \emph{basic arithmetical axioms} are those of Peano arithmetic,
minus the induction scheme: in other words, the axioms of a commutative discrete
ordered semiring.
The $\Sigma^0_1$ \emph{induction axiom scheme} consists of the universal
closures of all formulas of the form
\begin{equation}
    \label{eqn:Ind}
    \tag{$\Sigma^0_1\text{-}\mathsf{Ind}$}
    (\varphi(0) \wedge \forall{n}(\varphi(n) \rightarrow \varphi(n+1)))
        \rightarrow \forall{n}\varphi(n),
\end{equation}
where $\varphi$ is a $\Sigma^0_1$ formula, i.e.\ one of the form
$\exists{k}\theta(n,k)$ where $\theta$ contains only bounded quantifiers.
Finally, the \emph{recursive} or $\Delta^0_1$ \emph{comprehension axiom scheme}
consists of the universal closures of all formulas of the form
\begin{equation}
    \label{eqn:RCA}
    \tag{$\Delta^0_1\text{-}\mathsf{CA}$}
    (\varphi(n) \leftrightarrow \psi(n))
        \rightarrow \exists{X}\forall{n}(n \in X \leftrightarrow \varphi(n)),
\end{equation}
where $\varphi$ is a $\Sigma^0_1$ formula and $\psi$ is a $\Pi^0_1$ formula,
i.e.\ one of the form $\forall{k}\theta(n,k)$ where $\theta$ contains only
bounded quantifiers. 

Other subsystems of second-order arithmetic are obtained by extending $\RCA_0$
with additional axioms. The present paper is concerned only with one of these
systems, $\ACA_0$, which is obtained by augmenting the axioms of $\RCA_0$
with the \emph{arithmetical comprehension scheme}, which consists of the
universal closures of all formulas of the form
\begin{equation}
    \label{eqn:ACA}
    \tag{$\mathrm{ACA}$}
    \exists{X}\forall{n}(n \in X \leftrightarrow \varphi(n)),
\end{equation}
where $\varphi$ is an arithmetical formula, i.e.\ which may contain number
quantifiers but no set quantifiers, although it may contain free set variables.


\subsection{Countable algebras and ultrafilters}
\label{sec:algebras}

Our approach to ultrafilters on countable algebras of sets is based on that of
Hirst~\cite{Hirst2004}. We use the standard coding of a sequence of sets by a
single sets using the primitive recursive pairing map $(m,n) = (m + n)^2 + m$.
$Y \subseteq \N$ is a \emph{sequence of sets}, $Y = \str{Y_i : i \in \N}$, if
\begin{equation*}
    (i,v) \in Y \leftrightarrow v \in Y_i
\end{equation*}
for all $i,v \in \N$.

\begin{dfn}[countable algebras of sets]
    \label{dfn:ctble_algebras}
    Let $V \subseteq \N$ and let $\mathcal{A} = \str{A_n : n \in \N}$ be a
    countable sequence of sets such that for every $i \in \N$,
    $A_i \subseteq V$.
    $\mathcal{A}$ is a \emph{countable algebra} over $V$ if it contains $V$ and
    it is closed under unions, intersections, and complements relative to $V$.
    A countable algebra $\mathcal{A}$ over $V$ is \emph{atomic} if for all
    $v \in V$, there exists $k \in \N$ such that $A_k = \{ v \}$.
\end{dfn}

If $\mathcal{A}$ is a countable algebra over a set $V$, we write $A_i^c$ to
denote its relative complement $V \setminus A_i$.
Repetitions are allowed, so given a countable algebra $\mathcal{A}$ we can
computably construct an algebra $\mathcal{A}'$ which contains the same sets
(typically in a different order) in which we can uniformly compute the
operations of complementation, union, and intersection. We make this precise
through the following definition.

\begin{dfn}[boolean embeddings]
    \label{dfn:bool_embed}
    A \emph{boolean formation sequence} is a finite sequence $s \in \Seq$ with
    $\lh{s} \geq 1$ such that for all $j < \lh{s}$, one of the following obtains
    for some $n,m < j$:
    \begin{enumerate}
        \item $s(j) = (0,n,n)$,
        \item $s(j) = (1,n,n)$ and $n < j$,
        \item $s(j) = (2,n,m)$ and $n,m < j$.
    \end{enumerate}
    If $s$ is a boolean formation sequence then we write $s \in \mathrm{BFS}$.
    
    Fix a set $V \subseteq \N$ and suppose that $S = \str{ S_i : i \in \N }$ is
    a countable sequence of subsets of $V$ and that
    $\mathcal{A} = \str{ A_i : i \in \N }$ is an algebra of sets over $V$.
    A function $e : \mathrm{BFS} \to \N$ is a \emph{boolean embedding} of $S$
    into $\mathcal{A}$ if for all boolean formation sequences $s$ with
    $k = \lh{s} - 1$, there exist $n,m < s$ such that
    \begin{enumerate}
        \item If $s(k) = (0,n,n)$ then $A_{e(s)} = S_n$,
        \item If $s(k) = (1,n,n)$ and $n < k$
            then $A_{e(s)} = A_{e(s\restr{n+1})}^c$,
        \item If $s(k) = (2,n,m)$ and $n,m < k$
            then $A_{e(s)} = A_{e(s\restr{n+1})} \cap A_{e(s\restr{m+1})}$.
    \end{enumerate}
\end{dfn}

The following lemma is a straightforward exercise in primitive recursion.

\begin{lem}
    \label{lem:boolean_embedding}
    The following is provable in $\RCA_0$.
    Suppose $S = \str{ S_i : i \in \N }$ is a sequence of subsets of
    $V \subseteq \N$. Then there exists an algebra $\mathcal{A}$ over $V$, a
    boolean embedding $e$ of $S$ into $\mathcal{A}$, and a boolean embedding
    $e^*$ from $\mathcal{A}$ into $\mathcal{A}$.
    
    Moreover, if $S$ is already a countable algebra over $V$, then
    \begin{enumerate}
        \item For all $m \in \N$, $S_m = A_{e(\str{(0,m,m)})}$, and
        \item For all $n \in \N$ there exists $k \in \N$ such that $A_n = S_k$.
    \end{enumerate}
\end{lem}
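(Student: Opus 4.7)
The plan is to build $\mathcal{A}$ by primitive recursion on a computable enumeration $\{s_n : n \in \N\}$ of boolean formation sequences over $S$. For each $n$, define $A_n$ by stepping through $s_n$: an entry of type $(0,m,m)$ introduces the generator $S_m$ as the current interim value, $(1,n',n')$ complements the $(n'+1)$-st prior interim value, and $(2,n',m')$ intersects two prior interim values. The embedding $e : \mathrm{BFS} \to \N$ is then the index map $s \mapsto n$ with $s_n = s$; by construction $A_{e(s)}$ satisfies the three clauses of Definition~\ref{dfn:bool_embed}.

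Next I would verify that $\mathcal{A}$ is a countable algebra over $V$. Closure under complement follows because extending any $s_i$ by one further step with code $(1, \lh{s_i}-1, \lh{s_i}-1)$ produces a BFS whose evaluation is $V \setminus A_i$; closure under intersection is analogous using rule~(3), and union is derived from these via De~Morgan. The set $V$ itself is obtained via a short BFS evaluating to, for example, $S_0 \cup (V \setminus S_0)$. For $e^*$ I would use flattening: given a BFS $t$ whose generator steps $(0,n,n)$ refer to elements $A_n$ of $\mathcal{A}$, substitute the BFS $s_n$ underlying each $A_n$ (with indices shifted so the result $t'$ remains a well-formed BFS over $S$), and set $e^*(t) := e(t')$. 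This substitution is primitive recursive on finite sequences.

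For the moreover clause, assume $S$ is already a countable algebra. Item~(1) is immediate: the single-step BFS $\langle(0,m,m)\rangle$ evaluates directly to $S_m$. Item~(2) is established by induction on $\lh{s_n}$: the base case takes $k$ from the generator index, and at each inductive step we invoke the closure of $S$ under complement or intersection to supply a new witness $k$ from the witnesses supplied by the induction hypothesis.

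The main obstacle I anticipate is fitting item~(2) within the $\Sigma^0_1$-induction available in $\RCA_0$: the target formula ``$\exists k(A_n = S_k)$'' is naively $\Sigma^0_2$ because set equality unfolds to a $\Pi^0_1$ condition, so careful formulation — for example, packaging the witnesses for successive prefixes of $s_n$ into a single finite sequence of codes, or exploiting any effective content already implicit in the hypothesis that $S$ is a countable algebra — will be needed to keep the induction within the base theory. The rest of the argument is routine primitive recursion on the structure of BFS.
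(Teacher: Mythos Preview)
The paper does not supply a proof of this lemma at all: it is introduced with the single sentence ``The following lemma is a straightforward exercise in primitive recursion'' and no further argument is given. Your construction---enumerate the boolean formation sequences, evaluate each one recursively to produce $A_n$, take $e$ to be the inverse of the enumeration, and obtain $e^*$ by substituting the underlying BFS for each generator---is exactly that exercise, so your approach coincides with what the paper has in mind.

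Your concern about item~(2) is well taken and goes beyond anything the paper addresses. Note that your first proposed fix, packaging the witnesses for the prefixes of $s_n$ into a single finite sequence, does not by itself lower the complexity: the packaged statement ``$\exists t\,(\forall i<\lh{s_n})(A_{e(s_n\restr{i+1})}=S_{t(i)})$'' still has a $\Pi^0_1$ set equality under the leading existential, so the induction remains $\Sigma^0_2$. Your second suggestion, exploiting effective content in the closure hypothesis, runs into the problem that Definition~\ref{dfn:ctble_algebras} asserts only closure, not the existence of witnessing \emph{functions} for complement and intersection. Absent such functions, establishing item~(2) uniformly in $n$ appears to need $\mathsf{B}\Sigma^0_2$ (equivalently $\Sigma^0_2$-induction), which $\RCA_0$ does not provide. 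The paper simply glosses over this; fortunately item~(2) does not seem to be load-bearing for the downstream applications (item~(1) already suffices to transfer ultrafilters between $S$ and $\mathcal{A}$), so the gap is more a matter of precision in the lemma's statement than a threat to the main results.
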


\begin{dfn}[ultrafilters]
    \label{dfn:ultrafilters}
    Suppose $\mathcal{A} = \str{A_n : n \in N}$ is a countable algebra over
    $V \subseteq \N$. $\Uf \subseteq \N$ is an \emph{ultrafilter} on
    $\mathcal{A}$ if it obeys the following conditions for all $i,j,k \in \N$.
    \begin{enumerate}
        \item (Non-emptiness.)
            If $A_i = V$,
            then $i \in \Uf$.
        \item (Properness.)
            If $A_i = \emptyset$,
            then $i \not\in \Uf$.
        \item (Upwards closure.)
            If $i \in \Uf$ and $A_i \subseteq A_j$,
            then $j \in \Uf$.
        \item (Intersections.)
            If $i,j \in \Uf$ and $A_k = A_i \cap A_j$,
            then $k \in \Uf$.
        \item (Maximality.)
            If $A_j = A_i^c$,
            then $i \in \Uf$ or $j \in \Uf$.
    \end{enumerate}
    An ultrafilter $\Uf$ is \emph{principal} if it obeys the
    following condition, and \emph{non-principal} otherwise.
    \begin{enumerate}[resume]
        \item (Principality.)
            There exist $k,d \in \N$ such that $k \in \Uf$ and $A_k = \{ d \}$.
    \end{enumerate}
\end{dfn}

The next lemma is elementary, but worth stating as it is used a number of times.

\begin{lem}
    \label{lem:uf_basic}
    The following is provable in $\RCA_0$.
    Suppose $\mathcal{A}$ is a countable atomic algebra over $V \subseteq \N$
    and $\Uf \subseteq \N$ is an ultrafilter on $\mathcal{A}$.
    Then $\Uf$ has the following properties for all $i,j,k \in \N$.
    \begin{enumerate}
        \item If $i \in \Uf$ and $A_j = A_i^c$,
            then $j \not\in \Uf$.
        \item If $A_k = A_i \cup A_j$ and $k \in \Uf$,
            then either $i \in \Uf$ or $j \in \Uf$.
        \item Suppose $\str{Y_i : i < k}$ is a finite sequence of sets and
            $s \in \Seq$ is such that $\lh{s} = k + 1$.
            If $Y_i = A_{s(i)}$ for all $i < k$,
            $(\bigcup_{i<k} Y_i) = A_{s(k)}$,
            and $s(k) \in \Uf$,
            then there exists $j < k$ such that $s(j) \in \Uf$.
        \item The following conditions are equivalent:
            \begin{enumerate}
                \item $\Uf$ is principal;
                \item There exists $k \in \N$ such that $A_k$ is finite and
                    $k \in \Uf$;
                \item There exists $d \in V$ such that for all $i \in \N$,
                    $i \in \Uf$ if and only if $d \in A_i$.
            \end{enumerate}
    \end{enumerate}
\end{lem}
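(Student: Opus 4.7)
The plan is to dispatch items (1)--(4) in order, using the ultrafilter axioms together with atomicity. For (1), algebra closure provides an index $\ell$ with $A_\ell = \emptyset = A_i \cap A_j$, so if both $i, j$ were in $\Uf$ the intersection axiom would put $\ell \in \Uf$, contradicting properness. For (2) I would argue contrapositively: if neither $i$ nor $j$ lies in $\Uf$, maximality supplies indices $i^*, j^*$ in $\Uf$ with $A_{i^*} = A_i^c$ and $A_{j^*} = A_j^c$; an algebra index for $A_{i^*} \cap A_{j^*} = A_k^c$ then belongs to $\Uf$ by the intersection axiom, contradicting (1) with $k$.

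For (3), I would induct on $k$. The case $k = 0$ is vacuous via properness applied to the empty union, and $k = 1$ follows from upwards closure since $A_{s(0)} = A_{s(1)}$. For the inductive step, use algebra closure to obtain an index $t$ for $\bigcup_{i < k-1} Y_i$; since $A_{s(k)} = A_t \cup Y_{k-1}$, part (2) yields either $s(k-1) \in \Uf$ (giving the witness $j = k-1$) or $t \in \Uf$, and in the latter case the inductive hypothesis applied to $\str{Y_i : i < k-1}$ with $t$ playing the role of the final index yields some $j < k-1$ with $s(j) \in \Uf$.

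For (4), I would prove the cycle $(a) \Rightarrow (b) \Rightarrow (c) \Rightarrow (a)$. The first implication is immediate since singletons are finite. For $(b) \Rightarrow (c)$, enumerate the finite set $A_k \in \Uf$ as $\{d_0, \ldots, d_{n-1}\}$; atomicity supplies singleton indices $k_\ell$ with $A_{k_\ell} = \{d_\ell\}$, and applying (3) to this union decomposition identifies some $k_\ell \in \Uf$. The value $d := d_\ell$ then satisfies the biconditional: upwards closure handles the direction $d \in A_i \Rightarrow i \in \Uf$, while (1) applied to the intersection $A_{k_\ell} \cap A_i = \emptyset$ (for $d \notin A_i$) handles the converse. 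For $(c) \Rightarrow (a)$, atomicity supplies an index for $\{d\}$, which by hypothesis lies in $\Uf$.

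The main obstacle is controlling the complexity of the induction in (3) within $\RCA_0$: stated directly, the set-equality premise $\bigcup_{i<k} Y_i = A_{s(k)}$ is $\Pi^0_1$, which would push the inductive statement to $\Pi^0_2$. I would resolve this by invoking lemma~\ref{lem:boolean_embedding} to produce, from any coded finite sequence of algebra indices, a single algebra index for the corresponding finite union via a primitive recursive operation on boolean formation sequences. The induction then reduces to the $\Pi^0_1$ statement that whenever this uniformly constructed union-index lies in $\Uf$, some input index already does---a complexity that $\RCA_0$'s induction scheme does handle.
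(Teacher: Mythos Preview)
The paper does not actually supply a proof of this lemma: it is introduced with the remark that it ``is elementary, but worth stating as it is used a number of times'' and no proof environment follows. So there is nothing to compare against, and the relevant question is simply whether your argument is correct in $\RCA_0$.

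It is. Your treatment of (1) and (2) is the standard one, and your handling of the induction in (3) is the genuinely non-trivial point: you are right that the naive formulation of the inductive predicate is $\Pi^0_2$ because of the set-equality hypothesis, and your fix---using lemma~\ref{lem:boolean_embedding} to obtain a primitive recursive union-index function and then inducting on the $\Pi^0_1$ statement about that function---is exactly what is needed. Once the induction goes through for the uniformly constructed union index, the general case follows by upwards closure, since any $s(k)$ with $A_{s(k)}$ equal to the union is extensionally equal to the computed index.

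One small wording issue in (4), $(b)\Rightarrow(c)$: you write ``(1) applied to the intersection $A_{k_\ell}\cap A_i=\emptyset$'', but part~(1) is a statement about complements, not about empty intersections. What you want is either the direct argument (if $i\in\Uf$ then the intersection axiom and properness give a contradiction) or, cleaner, the observation that $d\notin A_i$ implies $\{d\}\subseteq A_i^c$, whence upwards closure puts an index for $A_i^c$ into $\Uf$ and then (1) gives $i\notin\Uf$. Either route works; just cite the right clause.
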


When we come to consider Fishburn's possibility theorem in \S\ref{sec:fpt}, we
will need the following well-known result: the existence of non-principal
ultrafilters on countable algebras is equivalent to arithmetical comprehension.
This equivalence appears in its present guise as theorem~9 of
Kreuzer~\cite{Kreuzer2015}, but it has many antecedents. The proof of the
forward direction presented here follows a partition construction from
Kreuzer~\cite{Kreuzer2012}, although similar ideas have been used by others,
going back to Kirby and Paris~\cite{KirbyParis1977} and
Solovay~\cite{Solovay1978}. The reversal uses the fact that non-principal
ultrafilters refine the Fréchet filter in order to code the jump, an idea drawn
from Kirby~\cite[theorem~1.10]{Kirby1984}.

\begin{lem}
    \label{lem:uf_exist_equiv_ACA0}
    The following are equivalent over $\RCA_0$.
    \begin{enumerate}
        \item $\ACA_0$.
        \item For every infinite set $V \subseteq \N$ and every atomic
            countable algebra $\mathcal{A}$ over $V$, there exists a
            non-principal ultrafilter $\Uf$ on $\mathcal{A}$.
    \end{enumerate}
\end{lem}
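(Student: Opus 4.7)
The plan is to prove the two implications independently, using Lemma~\ref{lem:boolean_embedding} throughout so that indices for unions, intersections, and complements in the algebra are uniformly computable, and appealing to the elementary closure properties collected in Lemma~\ref{lem:uf_basic}. For $(1) \Rightarrow (2)$, I would work in $\ACA_0$ and construct a non-principal ultrafilter by a standard descending-chain recursion. Fix an atomic countable algebra $\mathcal{A} = \str{A_n : n \in \N}$ over an infinite $V \subseteq \N$. By arithmetical recursion, define a sequence of indices $i_0, i_1, \ldots$ such that $A_{i_0} = V$ and the $A_{i_n}$ form a descending chain of infinite sets: at stage $n+1$, consult the $\Pi^0_2$ condition whether $A_{i_n} \cap A_n$ is infinite, setting $A_{i_{n+1}}$ to an index for this intersection if so, and to an index for $A_{i_n} \cap A_n^c$ otherwise. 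Since $A_{i_n}$ is infinite, at least one of these branches is infinite. Now take
\begin{equation*}
    \Uf = \{ m \in \N : \exists n\, (A_{i_n} \subseteq A_m) \},
\end{equation*}
a $\Sigma^0_2$ set that exists by arithmetical comprehension. The ultrafilter axioms are routine: \emph{properness} and \emph{non-principality} follow from the infinitude of each $A_{i_n}$; \emph{intersection closure} follows by passing to a common later stage; and \emph{maximality} follows from the explicit case split at each stage, which places either $n$ or an index for $A_n^c$ into $\Uf$.

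For $(2) \Rightarrow (1)$, I would appeal to the standard criterion that $\ACA_0$ is derivable in $\RCA_0$ once the range of an arbitrary injection $f : \N \to \N$ can be formed as a set. Given such $f$, let $B_n = \{ k : \forall i \leq k,\, f(i) \neq n \}$ for each $n$; the sequence of $B_n$'s is $\Delta^0_1$ in $f$ and hence available in $\RCA_0$. A brief case analysis shows that $B_n$ is a finite initial segment of $\N$ whenever $n \in \mathrm{range}(f)$, while $B_n = \N$ otherwise. Applying Lemma~\ref{lem:boolean_embedding} to the $B_n$'s together with the sequence of all singletons yields an atomic countable algebra $\mathcal{A}$ over $\N$ and uniformly computable indices $j_n$ with $A_{j_n} = B_n^c$. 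Given a non-principal ultrafilter $\Uf$ on $\mathcal{A}$, Lemma~\ref{lem:uf_basic}(4) forces $B_n \notin \Uf$ when $B_n$ is finite, so $j_n \in \Uf$ by maximality; while $A_{j_n} = \emptyset$ when $B_n = \N$, forcing $j_n \notin \Uf$ by properness. Thus $n \in \mathrm{range}(f)$ iff $j_n \in \Uf$, and $\mathrm{range}(f)$ is $\Delta^0_1$-definable from $\Uf$ and the indices $j_n$, hence exists by recursive comprehension.

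The main substantive step is the arithmetical recursion in the forward direction: the $\Pi^0_2$ character of the ``is infinite'' test means it cannot be carried out in $\RCA_0$, so $\ACA_0$ is essential here in the manner of Kirby~\cite{Kirby1984} and Kreuzer~\cite{Kreuzer2015}. The remainder of both directions is routine bookkeeping with the boolean embedding machinery, whose uniformity is precisely what Lemma~\ref{lem:boolean_embedding} is designed to provide.
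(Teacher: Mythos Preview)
The paper does not supply its own proof of this lemma: it is stated as a well-known result with citations to Kirby~\cite{Kirby1984} and Kreuzer~\cite{Kreuzer2015}. Your argument is correct and is the standard one; in particular, your reversal is the same coding the paper later sketches at the end of \S\ref{sec:fpt} (there the sets $B_{2n} = \{ v : (\exists m < v)(h(m) = n) \}$ are cofinite exactly when $n$ lies in the range of $h$, which is just the complement of your choice), and your forward direction via a descending chain of infinite sets with the $\Pi^0_2$ infinitude test handled by arithmetical comprehension is the textbook construction. One small point worth making explicit in the forward direction: to keep the recursion within $\ACA_0$ rather than appearing to iterate jumps, first form the arithmetical set $I = \{ (m,n) : A_m \cap A_n \text{ is infinite} \}$ once, and then run the chain construction primitive-recursively relative to $I$.
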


\begin{proof}
    We first show that 1 implies 2. Working in $\ACA_0$, let $V \subseteq \N$ be
    infinite and let $\mathcal{A}$ be a countable algebra over $V$; we do not
    need the additional assumption that $\mathcal{A}$ is atomic.
    Given $s \in 2^{<\N}$, let
    \begin{equation}
        A^s = \bigcap_{i < \lh{s}} \left\{\begin{array}{ll}
            A_i     & \text{if } s(i) = 0, \\
            (A_i)^c & \text{if } s(i) = 1.
        \end{array}\right.
    \end{equation}
    By $\Sigma^0_0$ induction we have that for all $v \in V$,
    $\forall{n}\exists{!s \in 2^n}(v \in A^s)$. In other words,
    $\str{A^s : s \in 2^n}$ is a partition of $V$.
    To see this, let $t \in 2^n$ be the unique sequence such that $z \in A^t$.
    $v \in A^{t\concat\str{0}} \leftrightarrow v \in A_{n+1}$, so if
    $v \in A_{n+1}$ we set $s = t\concat\str{0}$ and if $v \not\in A_{n+1}$ then
    we set $s = t\concat\str{1}$. Since these possibilities are exclusive,
    either way $s \in 2^{n+1}$ is the unique sequence such that $v \in A^s$ as
    desired.
    Now let
    \begin{equation}
        T = \{ s \in 2^{<\N} : \text{$A^s$ is infinite} \}.
    \end{equation}
    $T$ exists by arithmetical comprehension. We claim that $T$ is an infinite
    tree. Suppose not, so there is some $n$ such that for all $s \in 2^n$,
    $A^s$ is finite. Let $A' = \cup_{s \in 2^n} A^s$. $A'$ is finite since every
    $A^s$ is, so let $m$ bound the elements of $A'$. By assumption $V$ is
    infinite, so there exists $v \in V$ such that $v > m$. $v \not\in A'$ so
    $v \not\in A^s$ for all $s \in 2^n$, contradicting the fact that
    $\str{A^s : s \in 2^n}$ partitions $V$.
    By weak K\"{o}nig's lemma that there exists an infinite path $P$ in $T$, so
    let $\Uf = \{ k : P(k) = 0 \}$, which exists by recursive comprehension in
    the parameter $P$. To complete the proof we show that $\Uf$ is a
    non-principal ultrafilter on $\mathcal{A}$.
    
    To establish non-principality it suffices to note that every
    $A_i$ such that $i \in \Uf$ is infinite because $A^{P \restr {i + 1}}$ is an
    infinite subset of $A_i$.
    To show maximality, let $i \in \Uf$ be arbitrary with $(A_i)^c = A_j$,
    and suppose $j \in \Uf$. Let $k = \max \{ i,j \} + 1$. Since, by our
    assumption, $P(i) = P(j) = 0$, we have that $A^{P \restr k} = \emptyset$,
    contradicting the fact that $P \restr k \in T$
    and so $A^{P \restr k}$ is infinite.
    To show that $\Uf$ is closed under intersections, let
    $i,j \in \Uf$, let $A_m = A_i \cap A_j$, and let
    $A_n = (A_m)^c$. Suppose for a contradiction that $m \not\in \Uf$,
    so by maximality $n \in \Uf$. Let $k = \max \{ i,j,m,n \} + 1$.
    Then $A^{P \restr k} = \emptyset$, contradicting the fact that
    $P \restr k \in T$.
    A similar argument establishes upwards closure. Take $i \in \Uf$ and
    suppose $A_i \subseteq A_j$. Towards a contradiction assume that
    $j \not \in \Uf$, so by maximality and intersections if
    $A_k = A_i \cap (A_j)^c = \emptyset$ then $k \in \Uf$, contradicting
    non-principality.
    
    
    Working now in $\RCA_0$, we show that 2 implies 1.
    To prove arithmetical comprehension it suffices to prove that the range of
    any one-to-one function $h : \N \to \N$ exists
    \cite[lemma III.1.3, pp.~105--106]{Simpson2009}.
    The sequence
    \begin{equation}
        B = \{ (2n,  v) : v \in V \wedge (\exists{k < v})(h(k) = n) \} \cup
            \{ (2n+1,n) : n \in V \}
    \end{equation}
    exists by recursive comprehension since all quantifiers in its definition
    are bound\-ed, and by lemma~\ref{lem:boolean_embedding} there exists a
    countable algebra $\mathcal{A} = \str{A_i : i \in \N}$ over $\N$ and a
    boolean embedding $e : \mathrm{BFS} \to \N$ of $B$ into $\mathcal{A}$.
    The right-hand-side of the union defining $B$ ensures that $\mathcal{A}$ is
    atomic, i.e.\ it contains all singletons $\{v\}$ for $v \in V$.
    
    For convenience we write $n'$ to mean $e(\str{(0,2n,2n)})$, i.e.\ the index
    in $\mathcal{A}$ such that $A_{n'} = B_n$.
    
    By 2 there exists $\Uf \subseteq \N$ such that $\Uf$ is a non-principal
    ultrafilter on $\mathcal{A}$, and by recursive comprehension, the set
    $Y = \{ n : n' \in \Uf \}$ exists.
    We show that $Y = \ran(h) = \{ n : \exists{k}(h(k) = n) \}$.
    
    Suppose $n \in Y$, so $n' \in \Uf$ and thus by non-principality
    $A_{n'}$ is non-empty, meaning there is some $v$ such that
    $(\exists{k<v})(h(k) = n)$. It follows that $\exists{k}(h(k) = n)$,
    i.e.\ $n \in \ran(h)$.
    For the converse note that if $\exists{k}(h(k) = n)$ then
    $A_{n'}$ is cofinite. To see this, fix any $m \in A_{n'}$ and any
    $j \in \N$. Assume $v + j \in A_{n'}$, so there exists some
    $k < v + j$ such that $h(k) = n$. $k < v + j + 1$, so by $\Sigma^0_0$
    induction, for all $j$, $v + j \in A_{n'}$. Consequently $A_{n'}$ is
    cofinite and so by maximality $n' \in \Uf$, and thus $n \in Y$.
\end{proof}


\subsection{Orderings in social choice theory}
\label{sec:orders}

The paper aims to be self-contained where notions from social choice theory are
concerned, but a good starting point for a deeper study is Taylor's monograph
\emph{Social Choice and the Mathematics of Manipulation}~\cite{Taylor2005}.
In social choice theory, voters express their preferences as orders on the set
of alternatives $X$ (e.g.\ ranking candidates in an election). These orders are
required to be transitive and strongly connected, but ties are permitted to
express indifference between alternatives. This notion is standardly called a
\emph{weak order} in the social choice theory literature, and we follow this
terminology here, noting that it is synonymous with the notion of a total
preorder. In this paper we will be concerned exclusively with finite sets of
alternatives $X$, and hence all our weak orders will be assumed to be coded by
natural numbers.

\begin{dfn}[weak orders]
    \label{dfn:orders}
    Suppose $X \subseteq \N$ is nonempty and $R \subseteq X \times X$.
    $R$ is \emph{strongly connected} if $(x,y) \in R$ or $(y,x) \in R$ for all
    $x,y \in X$.
    
    If $R$ is a transitive and strongly connected relation then we call it a
    \emph{weak order} and write $x \lesssim_R y$ to mean $(x,y) \in R$,
    $x <_R y$ to mean $(x,y) \in R \wedge (y,x) \not\in R$, and $x \sim_R y$ to
    mean $(x,y) \in R \wedge (y,x) \in R$.
\end{dfn}

Many basic properties of weak orders can be established in $\RCA_0$.
For example, if $R$ is a weak order then
\begin{enumerate}
    \item $\lesssim_R$ is reflexive;
    \item $\sim_R$ is an equivalence relation on $X$;
    \item If $x <_R z$ then $x <_R y$ or $y <_R z$ (negative transitivity).
\end{enumerate}

Given a set $V \subseteq \N$ of voters and a finite set $X \subseteq \N$ of
alternatives, we let $W$ be the set of all (codes for) weak orders on $X$. A
\emph{profile} is a function $f : V \to W$. In practice we will always be
concerned with countable sequences $\mathcal{F} = \str{ f_i : i \in \N }$ of
profiles. If $f_i$ is a profile and $v \in V$ is a voter then we write
$x \lesssim_{i(v)} y$ to mean that $x \lesssim_R y$ where $R = f_i(v)$, i.e.\
that alternative $x$ is preferred to $y$ by voter $v$ in the voting scenario
represented by the profile $f_i$. Similarly we write $x <_{i(v)} y$ to mean
$x <_R y$, and $x \sim_{i(v)} y$ to mean $x \sim_R y$.

A \emph{coalition} is simply a set $C \subseteq V$ of voters; by convention, we
allow both the empty set and singleton sets containing only one voter to count
as coalitions. Given a coalition $C$, we write $x \lesssim_{i[C]} y$ to mean
that $x \lesssim_{i(v)} y$ for all $v \in C$, and $x <_{i[C]} y$ and
$x \sim_{i[C]} y$ have their obvious meanings.

If $Y \subseteq X$, we write \emph{$f_i(v) = f_j(v)$ on $Y$} to mean that
$x \lesssim_{i(v)} y \leftrightarrow x \lesssim_{j(v)} y$ for all $x,y \in Y$,
i.e.\ that $v$'s preferences regarding all $x$ and $y$ in $S$ are the same under
both the voting scenarios represented by the profiles $f_i$ and $f_j$.
We write \emph{$f_i = f_j$ on $Y$} to mean that $f_i(v) = f_j(v)$ on
$Y$ for all $v \in V$.


\section{Countable societies}
\label{sec:societies}

In the classical social choice literature, the notion of a society has been
generalised by Armstrong~\cite{Armstrong1980} to allow $\mathcal{A}$ to be any
algebra of sets over $V$, rather than all of $\powset{V}$. In Armstrong's
generalisation $\mathcal{F}$ is always the set of all $\mathcal{A}$-measurable
profiles, i.e.\ those $f : V \to W$ such that for all $x,y \in X$,
$\{ v : x \lesssim_{f(v)} y \} \in \mathcal{A}$.
This paper only addresses the countable case, i.e.\ when not only $V$ but also
$\mathcal{A}$ and $\mathcal{F}$ are countable objects that can be coded by
sets of natural numbers.%
\footnote{%
A different approach, following that of Towsner~\cite{Towsner2014}, would be to
introduce new symbols $\mathfrak{U}$ and $\mathfrak{S}$ standing for third-order
objects like ultrafilters and social welfare functions. However, the approach
via countable algebras pursued in this paper is more congenial to both the
reverse mathematics and the underlying motivation of viewing social welfare
functions as potentially computable (and hence countable) objects.
}

A \emph{countable society} consists of a set of voters $V \subseteq \N$, a
finite set of alternatives $X \subseteq \N$ and the associated set $W$ of weak
orders on $X$, an atomic countable algebra of coalitions $\mathcal{A}$, and a
countable sequence of profiles $\mathcal{F} = \str{ f_i : i \in \N }$ over
$V,X$ (i.e.\ for all $i$, $f_i$ is a function from $V$ to $W$). However, in
order for theorems about countable societies to continue to make sense in the
way they do when $\mathcal{A} = \powset{V}$ and $\mathcal{F} = W^V$, we need to
impose certain conditions on $\mathcal{A}$ and $\mathcal{F}$. The first such
condition is that profiles in $\mathcal{F}$ are measurable by coalitions in
$\mathcal{A}$. Measurability must also be uniform, to ensure that proofs using
it can be carried out in $\RCA_0$.

\begin{dfn}[uniform measurability]
    \label{dfn:uniform_measurability}
    Suppose $V \subseteq \N$ is nonempty and $X \subseteq \N$ is finite, and
    that $\mathcal{A}$ is a countable algebra of sets over $V$ and $\mathcal{F}$
    is a countable sequence of profiles over $V,X$.
    If there exists $\mu : \N \times X \times X \to \N$ such that for all
    $n \in \N$, $x,y \in X$, and $v \in V$,
    \begin{equation*}
        x \lesssim_{n(v)} y \leftrightarrow v \in A_{\mu(n,x,y)},
    \end{equation*}
    then we say $\mathcal{F}$ is \emph{uniformly $\mathcal{A}$-measurable}.
\end{dfn}

\begin{lem}
    \label{lem:unif_embed}
    The following is provable in $\RCA_0$.
    Suppose $V \subseteq \N$ is nonempty and $X \subseteq \N$ is nonempty and
    finite, and that $\mathcal{A} = \str{ A_i : i \in \N }$ is a countable
    algebra of sets over $V$ and $\mathcal{F} = \str{ f_i : i \in \N }$ is a
    countable sequence of profiles over $V,X$.
    If $\mathcal{F}$ is uniformly $\mathcal{A}$-measurable then there exist
    functions $\mu_{<}, \mu_{\sim} : \N \times X \times X \to \N$ such that
    for all $n \in \N$, $x,y \in X$, and $v \in V$,
    \begin{equation*}
        x <_{n(v)} y \leftrightarrow v \in A_{\mu_{<}(n,x,y)}
    \end{equation*}
    and
    \begin{equation*}
        x \sim_{n(v)} y \leftrightarrow v \in A_{\mu_{\sim}(n,x,y)}.
    \end{equation*}
\end{lem}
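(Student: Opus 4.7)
The plan is to unfold $<$ and $\sim$ in terms of $\lesssim$, express the corresponding sets as boolean combinations of sets already indexed by $\mu$, and invoke Lemma~\ref{lem:boolean_embedding} to produce indices uniformly and primitive recursively.

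Unfolding the order notation, for any weak order $R$ we have $x <_R y$ iff $x \lesssim_R y$ and $\neg(y \lesssim_R x)$, while $x \sim_R y$ iff $x \lesssim_R y$ and $y \lesssim_R x$. Combined with uniform measurability, this gives
\begin{equation*}
    \{ v \in V : x <_{n(v)} y \} = A_{\mu(n,x,y)} \cap A_{\mu(n,y,x)}^c
\end{equation*}
and
\begin{equation*}
    \{ v \in V : x \sim_{n(v)} y \} = A_{\mu(n,x,y)} \cap A_{\mu(n,y,x)},
\end{equation*}
both of which lie in $\mathcal{A}$ since $\mathcal{A}$ is closed under intersections and relative complements.

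It remains to compute indices for these boolean combinations uniformly in $(n,x,y)$. I apply Lemma~\ref{lem:boolean_embedding} with $S = \mathcal{A}$, obtaining an algebra with the same underlying sets equipped with a boolean embedding $e$; the moreover clauses let us identify the reindexed algebra with $\mathcal{A}$ itself. Writing $i = \mu(n,x,y)$ and $j = \mu(n,y,x)$, the boolean formation sequence
\begin{equation*}
    s_< = \str{(0,i,i), (0,j,j), (1,1,1), (2,0,2)}
\end{equation*}
is built so that $A_{e(s_<)} = A_i \cap A_j^c$, and the shorter sequence
\begin{equation*}
    s_\sim = \str{(0,i,i), (0,j,j), (2,0,1)}
\end{equation*}
yields $A_{e(s_\sim)} = A_i \cap A_j$. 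Setting $\mu_<(n,x,y) = e(s_<)$ and $\mu_\sim(n,x,y) = e(s_\sim)$ gives the required functions; both are primitive recursive in $\mu$ and $e$ and so exist as total functions in $\RCA_0$ by $\Delta^0_1$-comprehension.

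The main obstacle is purely notational: writing down correct boolean formation sequences and unwinding the definition of $e$ to verify that the resulting indices pick out the claimed sets. Nothing in the argument goes beyond primitive recursion, matching the elementary character of Lemma~\ref{lem:boolean_embedding}.
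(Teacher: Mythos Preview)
The paper states this lemma without proof, treating it as a routine consequence of the boolean-embedding machinery set up in Lemma~\ref{lem:boolean_embedding}. Your argument is correct and is exactly the intended one: unfold $<$ and $\sim$ as boolean combinations of $\lesssim$, then use a boolean embedding to produce indices uniformly. Your explicit formation sequences are correct, and your handling of the reindexing issue (identifying the output algebra of Lemma~\ref{lem:boolean_embedding} with $\mathcal{A}$ via the moreover clauses) matches the convention the paper itself adopts later, e.g.\ in the proof of Lemma~\ref{lem:partition_embedding}.
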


The second condition, \emph{quasi-partition embedding}, ensures that finite
sequences of coalitions in $\mathcal{A}$ can be recovered uniformly from
profiles in $\mathcal{F}$. This condition emerges naturally from the proofs of
the Kirman--Sondermann theorem and Fishburn's possibility theorem, although to
the best of our knowledge it is isolated here for the first time.%
\footnote{%
Other weakenings of Arrow's universal domain condition are well-known, such as
the \emph{free triple property} and the \emph{chain property}, but when $V$ is
infinite these conditions still guarantee that $\mathcal{F}$ is uncountable.
}
Quasi-partitions of $V$, in which overlaps are allowed, are preferred to
partitions since they are more computationally tractable.%
\footnote{%
One way of thinking of this condition is as providing a uniform way of
transforming finite covers of $V$ into finite partitions of $V$.
}

\begin{dfn}[quasi-partition embedding]
    \label{dfn:qp_embedding}
    Suppose $V \subseteq \N$ is nonempty and $X \subseteq \N$ is finite with
    $|X| \geq 3$, and that $\mathcal{A}$ is a countable algebra of sets over $V$
    and $\mathcal{F}$ is a countable profile algebra over $V,X$.
    A \emph{permutation} of a finite set $W$ is a finite sequence $p \in \Seq$
    such that for all (codes for) weak orders $R \in W$ there exists a unique
    $i$ such that $p(i) = R$. We write $p \in \mathrm{Perm}(W)$ to indicate that
    $p$ is a permutation of $W$.
    A \emph{quasi-partition} is a finite sequence $s \in \Seq$ such that
    $1 \leq \lh{s}$. We write $s \in \mathrm{QPart}(k)$ to indicate that $s$ is
    a quasi-partition with $\lh{s} \leq k$.
    $\mathcal{A}$ is \emph{quasi-partition embedded} into $\mathcal{F}$ if there
    exists a function
    $e : \mathrm{Perm}(W) \times \mathrm{QPart}(\lh{W}) \to \N$
    such that for all $v \in V$,
    \begin{equation*}
        f_{e(p,s)}(v) = \begin{cases}
            p(i)          & \text{if }(\exists{!i < \lh{s}-1})
                                (v \in A_{s(i)}), \\
            p(\lh{s} - 1) & \text{otherwise}.
        \end{cases}
    \end{equation*}
\end{dfn}

\begin{dfn}[countable societies]
    A \emph{countable society} $\mathcal{S}$ consists of a nonempty set
    $V \subseteq \N$ of voters, a finite set $X \subseteq \N$ of alternatives
    with $|X| \geq 3$, an atomic countable algebra $\mathcal{A}$ over $V$, and a
    sequence $\mathcal{F} = \str{ f_i : i \in \N }$ of profiles over $V,X$ such
    that $\mathcal{F}$ is uniformly $\mathcal{A}$-measurable and $\mathcal{A}$
    is quasi-partition embedded into $\mathcal{F}$.
    
    A countable society $\mathcal{S}$ is \emph{finite} if $V$ is finite, and
    \emph{infinite} otherwise.
\end{dfn}

\begin{dfn}[social welfare functions]
    \label{dfn:social_welfare_fns}
    Suppose that $\mathcal{S}$ is a countable society.
    $\sigma : \N \to W$ is a \emph{social welfare function} for $\mathcal{S}$
    if it obeys the following conditions.
    \begin{enumerate}
        \item (Unanimity.)
            For all $x, y \in X$ and $i \in \N$,
            if $x <_{i[V]} y$ then $x <_{\sigma(i)} y$.
        \item (Independence.)
            For all $x,y \in X$ and all $i,j \in \N$, if $f_i = f_j$ on
            $\{ x, y \}$ then $\sigma(i) = \sigma(j)$ on $\{ x, y \}$.
    \end{enumerate}
    If $\sigma$ obeys the following additional condition then it is
    \emph{non-dictatorial}.
    \begin{enumerate}[resume]
        \item (Non-dictatoriality.)
            For all $v \in V$ there exists $i \in \N$ and $x,y \in X$ such that
            $x <_{i(v)} y$ and $y \lesssim_{\sigma(i)} x$.
    \end{enumerate}
\end{dfn}

\begin{dfn}[decisive coalitions]
    \label{dfn:decisive}
    Suppose $\mathcal{S} = \str{V,X,\mathcal{A},\mathcal{F}}$ is a countable
    society and that $\sigma$ is a social welfare function for $\mathcal{S}$.
    \begin{enumerate}
        \item $A_n$ is \emph{$\sigma$-decisive for $x,y$} if for all $i$,
            $x <_{i[A_n]} y$ implies $x <_{\sigma(i)} y$.
        \item $A_n$ is \emph{$\sigma$-decisive} if it is $\sigma$-decisive for
            all $x,y \in X$.
        \item $A_n$ is \emph{almost $\sigma$-decisive for $x,y$ at $i$} if
            $x <_{i[A_n]} y$, $y <_{i[A_n^c]} x$, and $x <_{\sigma(i)} y$.
        \item $A_n$ is \emph{almost $\sigma$-decisive for $x,y$} if
            \begin{equation*}
            \forall{i}(
                (x <_{i[A_n]} y \wedge y <_{i[A_n^c]} x)
                \rightarrow x <_{\sigma(i)} y
            ).
            \end{equation*}
        \item $A_n$ is \emph{almost $\sigma$-decisive} if it is
            almost $\sigma$-decisive for all $x,y \in X$.
    \end{enumerate}
\end{dfn}

The notion of a decisive coalition is due to
Arrow~\cite[definition~10, p.~52]{Arrow1963}, while almost decisiveness was
introduced by Sen~\cite[definition~3*2, p.~42]{Sen1970a}.
The non-dictatoriality condition for social welfare functions can be rephrased
in terms of decisive coalitions, namely by saying that no singleton
$\{ d \} \subseteq V$ is $\sigma$-decisive. This gives rise to some natural
strengthenings of non-dictatoriality (definition~\ref{dfn:fpt}).


\section{Arrow's theorem via ultrafilters}
\label{sec:ks_arrow}

In this section we show how the Kirman--Sondermann analysis of social welfare
functions in terms of ultrafilters can be carried out in $\RCA_0$
(theorem~\ref{thm:KS_RCA0}). This immediately gives a proof of Arrow's theorem
in $\RCA_0$ (theorem~\ref{thm:arrow_RCA0}).

\begin{dfn}
    \label{dfn:KS_Arrow}
    The \emph{Kirman--Sondermann theorem for countable societies}
    ($\KS$) is the following statement:
    Suppose $\mathcal{S} = \str{V,X,\mathcal{A},\mathcal{F}}$ is a countable
    society and that $\sigma$ is a social welfare function for $\mathcal{S}$.
    Then there exists an ultrafilter
    \begin{equation*}
        \Uf_\sigma = \{ i : \text{$A_i$ is $\sigma$-decisive} \}
    \end{equation*}
    on $\mathcal{A}$ which is principal if and only if $\sigma$ is dictatorial.
    
    \emph{Arrow's theorem} is the statement that if $\mathcal{S}$ is a
    finite society and $\sigma$ is a social welfare function for $\mathcal{S}$,
    then $\sigma$ is dictatorial.
\end{dfn}

A crucial step in many proofs of Arrow's theorem is sometimes known in the
social choice literature as the
``spread of decisiveness'' \cite[pp.~35--37]{Sen2014} or the
``contagion lemma'' \cite[pp.~44--45]{CampbellKelly2002}.
Kirman and Sondermann's version of this is a lemma showing that there exists a
profile $f$ and a pair of alternatives $x,y \in X$ such that $C \subseteq V$ is
almost $\sigma$-decisive at $f$ for $x,y$ if and only if $C$ is almost
$\sigma$-decisive for every profile and every pair of alternatives
\cite[lemma~A]{KirmanSondermann1972}. In our arithmetical setting, the
corresponding versions of these two conditions are $\Sigma^0_2$ and $\Pi^0_2$
respectively, so formalising Kirman and Sondermann's lemma~A establishes that
the set $\{ i : \text{$A_i$ is almost $\sigma$-decisive} \}$ is $\Delta^0_2$
definable relative to $\mathcal{S}$ and $\sigma$. However, the definition of a
countable society in fact allows us to uniformly find witnesses for this last
condition, and thereby obtain a $\Sigma^0_0$ definition.

This and subsequent proofs are made easier by the use of some notation for weak
orders. Given distinct alternatives $x,y,z \in X$,
\begin{equation*}
    R = x < y < z \sim \ast
\end{equation*}
means that $R$ is a weak order such that $x <_R y$ and $y <_R z$, and hence
$x <_R z$. We use the wildcard symbol $\ast$ to quantify over all $c \in X$ not
explicitly mentioned, so in the example above, any other $c \in X$ is such that
$y <_R c$ but $z \sim_R c$.
This notation thus denotes a unique weak order, or rather, the natural number
coding it as a finite set.

\begin{lem}
    \label{lem:uf_defin}
    The following is provable in $\RCA_0$.
    Suppose $\mathcal{S} = \str{V,X,\mathcal{A},\mathcal{F}}$ is a countable
    society and $\sigma$ is a social welfare function for $\mathcal{S}$.
    Then there exists a function $g : \N \to \N$ and alternatives
    $a,b \in X$ such that the following conditions are equivalent
    for all $n \in \N$.
    \begin{enumerate}
        \item $A_n$ is almost $\sigma$-decisive.
        \item There exist $x,y \in X$ such that $A_n$ is almost
            $\sigma$-decisive for $x,y$.
        \item There exist $k \in \mathbb{N}$ and $x,y \in X$ such that
            $A_n$ is almost $\sigma$-decisive for $x,y$ at $k$.
        \item $a <_{\sigma(g(i))} b$.
    \end{enumerate}
\end{lem}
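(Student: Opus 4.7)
The plan is to establish the cycle (1) $\Rightarrow$ (2) $\Rightarrow$ (3) $\Rightarrow$ (1) together with (1) $\Leftrightarrow$ (4), where the last equivalence is secured by constructing $g$ explicitly via the quasi-partition embedding $e$ from definition~\ref{dfn:qp_embedding}. Of these, (1) $\Rightarrow$ (2) is immediate, and (3) $\Rightarrow$ (2) follows from IIA: if $A_n$ is almost $\sigma$-decisive for $x,y$ at some $k$, then any other profile $i$ satisfying $x <_{i[A_n]} y$ and $y <_{i[A_n^c]} x$ agrees with $f_k$ on $\{x,y\}$ voter by voter, so $\sigma(i) = \sigma(k)$ on $\{x,y\}$ and hence $x <_{\sigma(i)} y$.

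To go from (2) to (3), and in parallel to define $g$, I would fix distinct $a,b \in X$ once and for all and choose a permutation $p_0$ of $W$ whose first two entries are a weak order $R_1$ with $a <_{R_1} b$ and a weak order $R_2$ with $b <_{R_2} a$. Setting $g(n) = e(p_0, \str{n, 0})$, definition~\ref{dfn:qp_embedding} yields $f_{g(n)}(v) = R_1$ for $v \in A_n$ and $f_{g(n)}(v) = R_2$ otherwise, so $a <_{g(n)[A_n]} b$ and $b <_{g(n)[A_n^c]} a$. Thus (1) $\Rightarrow$ (4) by specialising almost-decisiveness of $A_n$ to the pair $a,b$ and the profile $f_{g(n)}$, and conversely (4) $\Rightarrow$ (3) is immediate: $a <_{\sigma(g(n))} b$ together with the structure of $f_{g(n)}$ exhibits $A_n$ as almost $\sigma$-decisive for $a,b$ at $g(n)$. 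The analogous two-point quasi-partition trick, applied to witnesses $x,y$ in place of $a,b$, produces an index $k$ witnessing (3) from (2).

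The substantive step is (2) $\Rightarrow$ (1), the classical spread of decisiveness, and this is where the hypothesis $|X| \geq 3$ is essential. I would formalise the standard contagion argument in $\RCA_0$. Given that $A_n$ is almost $\sigma$-decisive for some $x,y$ and an arbitrary target pair $x',y' \in X$, pick a third alternative $z$ and construct one or two auxiliary profiles via $e$ so that on $A_n$ the voters strictly order $\{x,y,z,x',y'\}$ in a chosen way and on $A_n^c$ they order them oppositely on certain pairs while agreeing unanimously on others. Invoking (2) supplies a strict social preference on the original pair $\{x,y\}$; unanimity pins down ordering on the pairs where voters agree; transitivity of $\sigma(i)$ combines these to yield the required strict preference on $\{x',y'\}$; and IIA propagates the conclusion to every profile satisfying the almost-decisiveness condition on $\{x',y'\}$. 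The main obstacle is bookkeeping: each auxiliary profile must be exhibited as an explicit output of $e$ so its index is uniformly available, and the iterated applications of IIA and transitivity must be arranged so that no comprehension beyond $\Delta^0_1$ is invoked. Uniform measurability ensures the relevant coalitions in $\mathcal{A}$ can be extracted from each constructed profile computably, and this is what makes both $g(n)$ and the verification $a <_{\sigma(g(n))} b$ $\Sigma^0_0$ in the parameters, as the lemma requires.
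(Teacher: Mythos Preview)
Your proposal is correct and follows essentially the same route as the paper: the same cycle of implications, the same IIA argument for $(3)\Rightarrow(2)$, the same contagion argument for $(2)\Rightarrow(1)$ via auxiliary profiles obtained from the quasi-partition embedding, and the same construction of $g$ by fixing a two-value permutation and feeding the index $n$ into $e$. The only minor differences are cosmetic---you use a length-two quasi-partition $\str{n,0}$ where the paper uses $\str{n}$, and you explicitly justify $(2)\Rightarrow(3)$ by constructing a witnessing profile whereas the paper calls this step immediate---but neither changes the substance of the argument.
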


\begin{proof}
    It follows immediately from the statements that 1 implies 2,
    and 2 implies 3.
    We show that 3 implies 2. Let $f_m$ be arbitrary, let $A_n$ be almost
    $\sigma$-decisive for $x,y$ at $k$, and assume that $x <_{m[A_n]} y$ and
    $y <_{m[A_n^c]} x$.
    Given $v \in V$, if $v \in A_n$ then $x <_{k(v)} y$ by almost
    $\sigma$-decisiveness and $x <_{m(v)} y$ by assumption, while if
    $v \not\in A_n$ then $y <_{k(v)} x$ and $y <_{m(v)} x$, so $f_m = f_k$ on
    $\{ x, y \}$ and thus $\sigma(m) = \sigma(k)$ on $\{ x, y \}$ by
    independence. Since $x <_{\sigma(k)} y$ it follows that $x <_{\sigma(m)} y$,
    establishing that $A_n$ is almost $\sigma$-decisive for $x,y$.
    
    
    Now we show that 2 implies 1. Let $A_n$ be almost $\sigma$-decisive
    for $x,y$ and let $z \in X \setminus \{ x, y \}$. Assume that
    $x <_{m[A_n]} z$ and $z <_{m[A_n^c]} x$ for some $f_m$.
    Since $\mathcal{F}$ quasi-partition embeds $\mathcal{A}$, there exists
    $j$ such that
    \begin{equation*}
        f_j(v) = \left\{\begin{array}{ll}
            x < y < z \sim \ast & \text{if } v \in A_n, \\
            y < z < x \sim \ast & \text{if } v \in A_n^c.
        \end{array}\right.
    \end{equation*}
    By the almost $\sigma$-decisiveness of $A_n$ and the construction of $f_j$,
    it follows that $x <_{\sigma(j)} y$, and by unanimity, $y <_{\sigma(j)} z$,
    so by transitivity we have that $x <_{\sigma(j)} z$.
    By our initial assumption, and the construction of $f_j$, $f_m = f_j$ on
    $\{ x, z \}$, so by independence $x <_{\sigma(m)} z$.
    
    A similar argument yields that
    \begin{equation*}
        \left(z \lesssim_{m[A_n]} y \wedge y \lesssim_{m[A_n^c]} z\right)
            \rightarrow z \lesssim_{\sigma(m)} y.
    \end{equation*}
    
    Now fix $w \in X$. If $w \in \{ x, y, z \}$ we are done, so assume
    otherwise. Running the argument twice more we get that
    \begin{equation*}
        \left(z \lesssim_{m[A_n]} w \wedge w \lesssim_{m[A_n^c]} z\right)
            \rightarrow z \lesssim_{\sigma(m)} w,
    \end{equation*}
    and since $w,z$ were arbitrary, we have established that $A_n$ is
    almost $\sigma$-decisive.
    
    
    Finally we show that $g$ exists and that 1 and 4 are equivalent.
    Pick any $a,b \in X$ and let $p$ be a permutation of $W$ such that
    $p(0) = a < b < *$ and $p(1) = b < a < *$.
    $\mathcal{A}$ is quasi-partition embedded into $\mathcal{F}$ by some
    $e : \N \to \N$, so we have
    \begin{equation*}
        f_{e(p,\str{n})}(v) = \begin{cases}
            a < b < * & \text{if } v \in A_n, \\
            b < a < * & \text{if } v \in A_n^c.
        \end{cases}
    \end{equation*}
    The function $g(n) = e(p,\str{n})$ exists by recursive comprehension.
    
    If $A_n$ is almost $\sigma$-decisive then $a <_{\sigma(g(n))} b$ by the
    definition of $g$, so suppose for the converse implication that
    $a <_{\sigma(g(n))} b$.
    $a <_{g(n)[A_n]} b$ and $b <_{g(n)[A_n^c]} a$ by the definition of $g$,
    meaning $A_n$ is almost $\sigma$-decisive for $a,b$ at $g(n)$. By the
    equivalence between 1 and 3, $A_n$ is almost $\sigma$-decisive.
\end{proof}

\begin{lem}
    \label{lem:uf_sigma_ultrafilter}
    The following is provable in $\RCA_0$.
    Suppose $\mathcal{S} = \str{V,X,\mathcal{A},\mathcal{F}}$ is a countable
    society and $\sigma$ is a social welfare function for $\mathcal{S}$.
    Then the set
    \begin{equation*}
        \Uf_\sigma = \{ i \in \N : \text{$A_i$ is almost $\sigma$-decisive} \}
    \end{equation*}
    exists and forms an ultrafilter on $\mathcal{A}$.
\end{lem}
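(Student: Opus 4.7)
The existence of $\Uf_\sigma$ is immediate from Lemma~\ref{lem:uf_defin}: since the condition ``$A_n$ is almost $\sigma$-decisive'' is equivalent to the bounded formula $a <_{\sigma(g(n))} b$, the set $\Uf_\sigma$ is defined by $\Delta^0_1$ comprehension in $\RCA_0$. To verify the five ultrafilter axioms I would use Lemma~\ref{lem:uf_defin} as a constant tool: showing that some $A_n$ is almost $\sigma$-decisive reduces to exhibiting a single pair $x,y \in X$ and a single profile $f_m$ with $x <_{m[A_n]} y$, $y <_{m[A_n^c]} x$, and $x <_{\sigma(m)} y$. All the required profiles are built by applying quasi-partition embedding to finite disjoint unions of boolean combinations of $A_i$ and $A_j$, whose indices in $\mathcal{A}$ are uniformly available by Lemma~\ref{lem:boolean_embedding}.

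Non-emptiness follows directly from unanimity, since $A_i = V$ makes $x <_{m[V]} y$ imply $x <_{\sigma(m)} y$ while the condition on $V^c = \emptyset$ is vacuous. Properness follows symmetrically: almost $\sigma$-decisiveness of $\emptyset$ for any pair $x,y$ would, at a profile $f_m$ in which every voter ranks $y$ above $x$ (obtainable via quasi-partition embedding), force $x <_{\sigma(m)} y$, which unanimity forbids. For upward closure, given $A_i \subseteq A_j$ with $A_i$ almost $\sigma$-decisive, fix distinct $x,y,z \in X$ and use quasi-partition embedding to produce $f_m$ assigning the weak order $x < z < y$ on $A_i$, $z < x < y$ on $A_j \setminus A_i$, and $z < y < x$ on $A_j^c$. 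Almost $\sigma$-decisiveness of $A_i$ for $x,z$ then yields $x <_{\sigma(m)} z$, unanimity of $z$ over $y$ (which holds on all three regions) yields $z <_{\sigma(m)} y$, and transitivity gives $x <_{\sigma(m)} y$; since $x <_{m[A_j]} y$ and $y <_{m[A_j^c]} x$ hold by construction, $A_j$ is almost $\sigma$-decisive for $x,y$ at $m$.

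The intersection axiom for $A_k = A_i \cap A_j$ follows the same template with a four-region partition of $V$ into $A_i \cap A_j$, $A_i \cap A_j^c$, $A_i^c \cap A_j$, and $A_i^c \cap A_j^c$. One chooses weak orders on $\{x,y,z\}$ for each region so that the hypotheses of almost $\sigma$-decisiveness of $A_i$ for $x,z$ and of $A_j$ for $z,y$ both fire at the resulting profile $m$, yielding $x <_{\sigma(m)} z$ and $z <_{\sigma(m)} y$, and hence $x <_{\sigma(m)} y$ by transitivity, while inspection of the construction gives $x <_{m[A_k]} y$ and $y <_{m[A_k^c]} x$.

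I expect maximality to be the main obstacle, since it is a negative-to-positive argument that uses all three alternatives essentially. Suppose for contradiction that neither $A_i$ nor $A_i^c$ is almost $\sigma$-decisive. Construct $f_m$ via quasi-partition embedding with $x < y < z$ on $A_i$ and $y < z < x$ on $A_i^c$. Applying the failure of Lemma~\ref{lem:uf_defin} to $A_i$ on the pairs $(x,y)$ and $(x,z)$, and to $A_i^c$ on the pairs $(y,x)$ and $(z,x)$, forces $x \sim_{\sigma(m)} y$ and $x \sim_{\sigma(m)} z$. On the other hand, both regions rank $y$ above $z$, so unanimity gives $y <_{\sigma(m)} z$. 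Transitivity of $\sim_{\sigma(m)}$---a standard weak-order property provable in $\RCA_0$---then forces $y \sim_{\sigma(m)} z$, contradicting the strict preference. This three-alternative move, with one pair unanimous and two pairs simultaneously tied, is exactly where the hypothesis $|X| \geq 3$ does its work.
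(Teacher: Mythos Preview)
Your proposal is correct and follows essentially the same route as the paper: existence via Lemma~\ref{lem:uf_defin}, non-emptiness and properness from unanimity, upwards closure via a three-region profile with one unanimous pair, intersections via a four-region profile with two applications of almost decisiveness plus transitivity, and maximality via a two-region profile with one unanimous pair. The only cosmetic difference is that the paper handles maximality directly (unanimity gives $y <_{\sigma(m)} z$, so by the weak-order splitting property either $y <_{\sigma(m)} x$ or $x <_{\sigma(m)} z$, each case witnessing almost decisiveness of one side), whereas you phrase the same step as a contradiction through $\sim_{\sigma(m)}$; the two are contrapositives of one another.
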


\begin{proof}
    Working in $\RCA_0$, fix a countable society
    $\mathcal{S} = \str{V,X,\mathcal{A},\mathcal{F}}$
    and a social welfare function $\sigma$ for $\mathcal{S}$.
    For all of the arguments below we fix distinct $x,y,z \in X$.
    
    
    To show that $\Uf_\sigma$ exists, note that by lemma~\ref{lem:uf_defin}
    there exists a function $g : \N \to \N$ and $x,y \in X$ such that
    \begin{equation*}
        x <_{\sigma(g(i))} y \leftrightarrow
            \text{$A_i$ is almost $\sigma$-decisive}.
    \end{equation*}
    The left-hand side of this definition is $\Sigma^0_0$ in the parameters
    $\mathcal{S},\sigma,g$, so $\Uf_\sigma$ exists by recursive comprehension in
    those parameters.
    In the remainder of the proof we show that $\Uf_\sigma$ is an ultrafilter on
    $\mathcal{A}$.
    
    
    That $\Uf_\sigma$ contains an index for $V$ and no index for $\emptyset$
    follows straightforwardly from unanimity, so we next prove upwards closure
    under the subset relation. Suppose $i \in \Uf_\sigma$ and
    $A_i \subseteq A_j$, and partition $V$ into
    \begin{align*}
        V_0 &= A_i, \\
        V_1 &= A_i^c \cap A_j, \\
        V_2 &= A_j^c.
    \end{align*}
    Since $\mathcal{A}$ is quasi-partition embedded into $\mathcal{F}$, there
    exists some $m$ such that
    \begin{equation*}
        f_m(v) = \begin{cases}
            x < y < z \sim \ast & \text{if } v \in V_0, \\
            y < x < z \sim \ast & \text{if } v \in V_1, \\
            y < z < x \sim \ast & \text{if } v \in V_2.
        \end{cases}
    \end{equation*}
    $x <_{m[A_i]} y$ by the definition of $f_m$, so since $A_i$ is almost
    $\sigma$-decisive we have that $x <_{\sigma(m)} y$.
    The definition of $f_m$ also gives us that $y <_{m[V]} z$, so by unanimity,
    $y <_{\sigma(m)} z$, and by transitivity, $x <_{\sigma(m)} z$, which
    suffices to establish that $j \in \Uf_\sigma$ by clause~3 of
    lemma~\ref{lem:uf_defin}.
    
    
    Next we prove that $\Uf_\sigma$ is closed under intersections. Suppose that
    $i,j \in \Uf_\sigma$ and that $k$ is such that $A_k = A_i \cap A_j$.
    Partition $V$ into
    \begin{align*}
        V_1 &= A_i   \cap A_j,   \\
        V_2 &= A_i   \cap A_j^c, \\
        V_3 &= A_i^c \cap A_j,   \\
        V_4 &= A_i^c \cap A_j^c.
    \end{align*}
    By quasi-partition embedding let the profile $f_n$ be defined as follows.
    \begin{equation*}
        f_n(v) = \begin{cases}
            z < x < y \sim \ast & \text{if } v \in V_1, \\
            x < y < z \sim \ast & \text{if } v \in V_2, \\
            y < z < x \sim \ast & \text{if } v \in V_3, \\
            y < x < z \sim \ast & \text{if } v \in V_4.
        \end{cases}
    \end{equation*}
    Since $A_i = V_1 \cup V_2$ we have that $x <_{n[A_i]} y$ by the definition of
    $f_n$. Similarly since $A_i^c = V_3 \cup V_4$, $y <_{n[A_i^c]} x$, so by the
    almost $\sigma$-decisiveness of $A_i$ it follows that $x <_{\sigma(n)} y$.
    By a parallel piece of reasoning we have that $z <_{\sigma(n)} x$, and so by
    transitivity $z <_{\sigma(n)} y$. It follows by clause~3 of
    lemma~\ref{lem:uf_defin} that $A_k$ is almost $\sigma$-decisive.
    
    
    Finally we prove that $\Uf_\sigma$ satisfies maximality.
    Suppose that $A_j = A_i^c$. By quasi-partition embedding there exists some
    $m \in \N$ such that
    \begin{equation*}
        f_m(v) = \begin{cases}
            y < z < x \sim \ast & \text{if } v \in A_i, \\
            x < y < z \sim \ast & \text{if } v \in A_i^c.
        \end{cases}
    \end{equation*}
    By unanimity we have that $y <_{\sigma(m)} z$, so either $y <_{\sigma(m)} x$
    or $x <_{\sigma(m)} z$. In the former case, $m,y,x$ witness that $A_i$ is
    almost $\sigma$-decisive by clause~3 of lemma~\ref{lem:uf_defin}, while in
    the latter case $m,x,z$ witness that $A_j$ is almost $\sigma$-decisive.
\end{proof}

\begin{thm}
    \label{thm:KS_RCA0}
    $\KS$ is provable in $\RCA_0$.
\end{thm}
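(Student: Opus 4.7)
The plan is to use Lemma \ref{lem:uf_sigma_ultrafilter}, which shows that $\Uf^* = \{ i : A_i \text{ is almost $\sigma$-decisive} \}$ exists and forms an ultrafilter on $\mathcal{A}$, and then identify $\Uf^*$ with the set $\Uf_\sigma = \{ i : A_i \text{ is $\sigma$-decisive} \}$ appearing in the statement of $\KS$. Once this identification is done, $\Uf_\sigma$ inherits being an ultrafilter, and the principal-iff-dictatorial equivalence reduces to the observation that, for a singleton coalition $\{ d \} = A_k$, $\sigma$-decisiveness of $\{ d \}$ unpacks directly into the condition that $d$ dictates $\sigma$.

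The inclusion $\Uf_\sigma \subseteq \Uf^*$ is immediate from the definitions. For the reverse inclusion I would prove a field expansion lemma: if $A_n$ is almost $\sigma$-decisive, then for any pair $(a, b)$ with $a \neq b$ and any profile $f_i$ with $a <_{i[A_n]} b$, we have $a <_{\sigma(i)} b$. Since $|X| \geq 3$, pick $c \in X \setminus \{ a, b \}$; then uniform $\mathcal{A}$-measurability, together with Lemma \ref{lem:unif_embed} and closure of $\mathcal{A}$ under boolean operations, produces indices for the sets $D_1 = \{ v : a <_{i(v)} b \} \supseteq A_n$, $D_2 = \{ v : b <_{i(v)} a \}$, and $D_3 = \{ v : a \sim_{i(v)} b \}$, which together partition $V$. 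Quasi-partition embedding then yields a profile $f_j$ in which voters in $A_n$, $D_1 \setminus A_n$, $D_2$, and $D_3$ receive, respectively, weak orders expressing $a < c < b$, $c < a < b$, $c < b < a$, and $c < a \sim b$. By construction $f_j$ agrees with $f_i$ on $\{ a, b \}$ for every voter, so independence gives $\sigma(j) = \sigma(i)$ on $\{ a, b \}$; unanimity applied on $\{ c, b \}$ yields $c <_{\sigma(j)} b$; and almost $\sigma$-decisiveness of $A_n$ applied on $\{ a, c \}$ (whose hypothesis holds strictly on $A_n$ and, by the choice of the four orderings, strictly on $A_n^c$) yields $a <_{\sigma(j)} c$. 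Transitivity of the weak order $\sigma(j)$ combined with independence then transports $a <_{\sigma(j)} b$ back to $a <_{\sigma(i)} b$.

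The main obstacle is this field expansion step, and within it the need to apply independence uniformly in the presence of voters indifferent between $a$ and $b$ in $f_i$. This is precisely the role of the default slot in the definition of quasi-partition embedding: assigning a weak order witnessing $a \sim b$ to voters in $D_3$ while simultaneously placing $c$ strictly below $a$ for those voters secures both $f_j = f_i$ on $\{ a, b \}$ throughout and the strictness of $c <_{j[A_n^c]} a$ needed for almost decisiveness on $\{ a, c \}$ to fire. Every ingredient — the recovery of indices for $D_1, D_2, D_3$ and their boolean combinations, the construction of $f_j$, the verification of the ultrafilter axioms for $\Uf_\sigma$, and the unpacking of principality — is effective in the given parameters, so the whole argument formalises in $\RCA_0$.
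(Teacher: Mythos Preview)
Your proposal is correct and follows essentially the same route as the paper: invoke Lemma~\ref{lem:uf_sigma_ultrafilter} to obtain the ultrafilter of almost $\sigma$-decisive coalitions, prove a field-expansion step showing almost $\sigma$-decisive coincides with $\sigma$-decisive, and then read off the principal--dictatorial equivalence from atomicity. The only tactical difference is that you partition $V$ into four pieces (isolating $A_n$ from $D_1 \setminus A_n$) and apply almost decisiveness of $A_n$ directly, whereas the paper uses a three-piece partition $V_0, V_1, V_2$ and instead appeals to the already-established upward closure of $\Uf_\sigma$ to pass from $A_i$ to the supersets $V_0$ and $V_0 \cup V_2$ before invoking almost decisiveness; both variants are sound and formalise in $\RCA_0$.
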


\begin{proof}
    We work in $\RCA_0$. Let $\mathcal{S} = \str{V,X,\mathcal{A},\mathcal{F}}$
    be a countable society and let $\sigma : \N \to X$ be a social welfare
    function for $\mathcal{S}$.
    By lemma~\ref{lem:uf_sigma_ultrafilter} there exists an ultrafilter
    $\Uf_\sigma \subseteq \N$ on $\mathcal{A}$ such that
    \begin{equation*}
        \Uf_\sigma = \{ i \in \N : \text{$A_i$ is almost $\sigma$-decisive} \}.
    \end{equation*}
    It only remains to be shown that (i) $i \in \Uf_\sigma$ if and only if
    $A_i$ is $\sigma$-decisive, and (ii) $\Uf_\sigma$ is principal if and only
    if $\sigma$ is dictatorial.
    
    For (i), the backwards direction is immediate from the definitions.
    For the forward direction fix $A_i$ such that $i \in \Uf_\sigma$,
    i.e.\ $A_i$ is almost $\sigma$-decisive. Let $f_m$ and $x,y$ be such that
    $x <_{m[A_i]} y$; we will establish that $x <_{\sigma(m)} y$.
    
    Start by partitioning $V$ into the sets
    \begin{align*}
        V_0 &= \{ v : x <_{m(v)} y \}, \\
        V_1 &= \{ v : y <_{m(v)} x \}, \\
        V_2 &= \{ v : x \sim_{m(v)} y \} = (V_0 \cup V_1)^c.
    \end{align*}
    By uniform $\mathcal{A}$-measurability, there exist $e_0,e_1,e_2 \in \N$
    such that $A_{e_j} = V_j$ for all $j \leq 2$, and because $\mathcal{F}$
    quasi-partition embeds $\mathcal{A}$, there exists $n \in \N$ such that
    \begin{equation*}
        f_n(v) = \begin{cases}
            x < z < y \sim \ast    & \text{if } v \in V_0, \\
            y < z < x \sim \ast    & \text{if } v \in V_1, \\
            x \sim y < z \sim \ast & \text{if } v \in V_2.
        \end{cases}
    \end{equation*}
    
    By hypothesis we have that $A_i$ is almost $\sigma$-decisive and
    $A_i \subseteq V_0$, so $V_0$ is almost $\sigma$-decisive by upwards
    closure. $V_0^c = V_1 \cup V_2$, so since $z <_{n[V_0]} y$ and
    $y <_{n[V_1 \cup V_2]} z$, it follows from the almost $\sigma$-decisiveness
    of $V_0$ that $z <_{\sigma(n)} y$.
    
    Let $e_3$ be such that $A_{e_3} = V_0 \cup V_2$, and hence
    $A_{e_3}^c = V_1$. By upwards closure again, $A_{e_3}$ is almost
    $\sigma$-decisive, and so because $x <_{n[A_{e_3}]} z$ and
    $z <_{n[A_{e_3}^c]} x$, $x <_{\sigma(n)} z$. It follows by transitivity that
    $x <_{\sigma(n)} y$.
    Finally, by definition $f_n = f_m$ on $\{ x, y \}$, and so by independence
    $x <_{\sigma(m)} y$ as desired.
    
    For the forward direction of (ii), assume that there exist $k,d$ such that
    $A_k = \{ d \}$ and $k \in \Uf_\sigma$. It follows from i) that $A_k$ is
    $\sigma$-decisive, and so $d$ is a dictator for $\sigma$.
    
    
    For the backwards direction of (ii), suppose that $\sigma$ has a dictator
    $d \in V$. $\mathcal{A}$ is atomic, so let $k$ be any index such that
    $A_k = \{ d \}$.
    Since $\mathcal{F}$ quasi-partition embeds $\mathcal{A}$, there exists an
    $n$ such that $f_n$ is defined as follows.
    \begin{equation*}
        f_n(v) = \begin{cases}
            x < y \sim \ast & \text{if } v \in A_k, \\
            y < x \sim \ast & \text{if } v \in A_k^c.
        \end{cases}
    \end{equation*}
    By the definition of $f_n$ we have that $x <_{n[A_k]} y$ and
    $y <_{n[A_k^c]} x$, and by the dictatoriality of $d$ we have that
    $x <_{\sigma(n)} y$, so by lemma~\ref{lem:uf_defin}, $k \in \Uf_\sigma$
    and hence $\Uf_\sigma$ is principal.
\end{proof}

\begin{thm}
    \label{thm:arrow_RCA0}
    Arrow's theorem is provable in $\RCA_0$.
\end{thm}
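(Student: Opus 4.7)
The plan is to derive Arrow's theorem as an immediate consequence of $\KS$ (theorem~\ref{thm:KS_RCA0}) together with the basic ultrafilter properties collected in lemma~\ref{lem:uf_basic}. Working in $\RCA_0$, I would fix an arbitrary finite countable society $\mathcal{S} = \str{V,X,\mathcal{A},\mathcal{F}}$ and a social welfare function $\sigma$ for $\mathcal{S}$, and aim to show that $\sigma$ is dictatorial.

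First I would apply theorem~\ref{thm:KS_RCA0} to $\mathcal{S}$ and $\sigma$, yielding the ultrafilter $\Uf_\sigma$ on $\mathcal{A}$ together with the equivalence that $\Uf_\sigma$ is principal if and only if $\sigma$ is dictatorial. Since $\mathcal{A}$ is a countable algebra over $V$ by definition, there exists some $k \in \N$ with $A_k = V$; by the non-emptiness clause of definition~\ref{dfn:ultrafilters} we have $k \in \Uf_\sigma$. Because $V$ is finite by hypothesis, $A_k$ is then a finite set belonging to $\Uf_\sigma$.

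Next I would invoke clause~4 of lemma~\ref{lem:uf_basic}, which asserts the equivalence between principality of $\Uf_\sigma$ and the existence of some $k \in \Uf_\sigma$ such that $A_k$ is finite. The previous step verifies this latter condition, so we may conclude that $\Uf_\sigma$ is principal, and therefore by theorem~\ref{thm:KS_RCA0} that $\sigma$ is dictatorial.

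Since all the substantive work has been done in theorem~\ref{thm:KS_RCA0} and lemma~\ref{lem:uf_basic}, no real obstacle remains for this proof. The only subtleties worth flagging are that the atomicity of $\mathcal{A}$ built into the countable-society definition is precisely what supports the principality characterisation in lemma~\ref{lem:uf_basic} (so that a principal ultrafilter is actually generated by a singleton in the algebra, producing a genuine dictator $d \in V$), and that the finiteness of $V$ enters the argument in only one place: to certify that $V$ itself is a finite element of $\mathcal{A}$ lying in $\Uf_\sigma$.
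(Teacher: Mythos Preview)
Your proposal is correct and matches the paper's proof essentially line for line: apply theorem~\ref{thm:KS_RCA0} to obtain $\Uf_\sigma$, then use the finiteness of $V$ together with part~4 of lemma~\ref{lem:uf_basic} to conclude that $\Uf_\sigma$ is principal and hence $\sigma$ is dictatorial. Your version simply makes explicit the step that $V$ itself is a finite element of $\Uf_\sigma$, which the paper leaves implicit in its appeal to lemma~\ref{lem:uf_basic}.
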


\begin{proof}
    We work in $\RCA_0$.
    Suppose $\mathcal{S} = \str{V,X,\mathcal{A},\mathcal{F}}$ is a finite
    society, and let $\sigma : \N \to W$ be any social welfare function for
    $\mathcal{S}$.
    By $\KS$ (theorem~\ref{thm:KS_RCA0}), there exists an ultrafilter
    $\Uf_\sigma$ on $\mathcal{A}$ which is principal if and only if $\sigma$ is
    dictatorial. Since $V$ is finite, $\Uf_\sigma$ is principal by part~4 of
    lemma~\ref{lem:uf_basic}. Therefore, $\sigma$ is dictatorial.
\end{proof}

Since all the objects involved in Arrow's theorem are finite, it can be
formalised as a sentence $\theta$ in the language of first-order arithmetic, by
replacing quantification over finite sets of natural numbers with quantification
over the numbers that code them (for details see \S II.2 of \cite{Simpson2009}
or \S 5.5.2 of \cite{DzhafarovMummert2022}). The first-order sentence $\theta$
then follows in $\RCA_0$ from the second-order statement of Arrow's theorem in
virtue of the coding. As long as one is careful with writing down the relevant
bounds, $\theta$ will be a $\Pi^0_1$ statement, i.e.\ of the form
$\forall{n}\psi(n)$ where $\psi(n)$ contains only bounded quantifiers. By
results of Friedman~\cite{Friedman1976} and Parsons~\cite{Parsons1970}, $\RCA_0$
is conservative over primitive recursive arithmetic ($\PRA$) for all $\Pi^0_2$
statements \cite[\S IX.1]{Simpson2009}. We therefore have that Arrow's theorem
(in the form of its first-order formalisation $\theta$) is provable in $\PRA$,
and hence it is \emph{finitarily provable} in the sense of Tait's analysis of
Hilbert's program \cite{Tait1981}. Moreover, the bounds in $\theta$ are
exponential, which suggests the following stronger result.

\begin{conj}
    \label{conj:arrow_id0exp}
    The first-order formalisation of Arrow's theorem is provable in
    $\mathrm{I}\Delta_0 + \mathrm{exp}$.
\end{conj}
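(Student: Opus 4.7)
The plan is to directly formalise the Kirman--Sondermann route to Arrow's theorem (Theorems~\ref{thm:KS_RCA0} and~\ref{thm:arrow_RCA0}) inside $\mathrm{I}\Delta_0 + \mathrm{exp}$, exploiting the fact that for any finite society with $|V|=n$ and $|X|=m$, every object appearing in the proof is coded by a natural number bounded by a fixed iterated exponential in $n+m$. Concretely: weak orders on $X$ are subsets of $X \times X$ (bounded by $2^{m^2}$); profiles $f : V \to W$ are sequence-coded by numbers of size at most roughly $2^{nm^2}$; the algebra $\mathcal{A} \subseteq \mathcal{P}(V)$ has at most $2^n$ members; and a social welfare function $\sigma$ is a finite graph of the same order of magnitude. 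Since $\mathrm{exp}$ makes all these bounds available, every quantifier ranging over such objects becomes a bounded $\Delta_0$-quantifier.

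The first task is to rewrite each construction from \S\ref{sec:ks_arrow} with explicit bounds. The quasi-partition embedding $e$ of Definition~\ref{dfn:qp_embedding}, in the finite case, admits a direct $\Delta_0$ definition: given a permutation $p$ and a quasi-partition $s$, the profile $e(p,s)$ is the code of the function that, for each voter $v \in V$, looks up by bounded search the unique $i$ with $v \in A_{s(i)}$ and returns $p(i)$. Consequently the function $g$ of Lemma~\ref{lem:uf_defin} and the set $\Uf_\sigma$ of Lemma~\ref{lem:uf_sigma_ultrafilter} become $\Delta_0$-definable relative to the coded society and $\sigma$, and exist as codes of length bounded by a fixed exponential.

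The second task is to audit each inductive step. The ``spread of decisiveness'' in Lemma~\ref{lem:uf_defin} iterates only over pairs of alternatives in $X$, of which there are fewer than $m^2$, and each iteration is a bounded case analysis over the coded profiles, which is $\Delta_0$. Verifying the ultrafilter axioms in Lemma~\ref{lem:uf_sigma_ultrafilter} proceeds by producing a witness profile via the explicit $e$ and checking a $\Delta_0$ condition. The endgame in Theorem~\ref{thm:arrow_RCA0} uses only part 4 of Lemma~\ref{lem:uf_basic}: that a finite set in $\Uf_\sigma$ forces principality. This is provable by $\Delta_0$-induction on the size of the finite set, using the intersection and maximality axioms to isolate a singleton.

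The main obstacle is the meticulous bookkeeping required to confirm that every use of induction or recursion in \S\ref{sec:ks_arrow} can be bounded, and that none of the implicit data (sequence codes for the algebra, for $g$, for $\Uf_\sigma$) escape the exponential regime. I expect no conceptual difficulty beyond this: once one has exponentially-bounded codes for all the finite objects and confirms that the Kirman--Sondermann construction never appeals to unbounded search or unbounded induction, Conjecture~\ref{conj:arrow_id0exp} should follow.
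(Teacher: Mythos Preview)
The paper does not prove this statement: it is explicitly labelled a \emph{conjecture}, and the surrounding text only motivates it by observing that the bounds appearing in the first-order formalisation $\theta$ are exponential. There is therefore no paper proof to compare your proposal against.

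That said, your sketch is exactly in the spirit of the paper's motivation. The paper notes that $\theta$ is $\Pi^0_1$ with exponential bounds and infers from this that $\mathrm{I}\Delta_0 + \mathrm{exp}$ ought to suffice; your proposal spells out what that inference would require, namely that every object in the Kirman--Sondermann argument of \S\ref{sec:ks_arrow} is coded by a number of iterated-exponential size in $n+m$, and that every quantifier and every induction can be bounded accordingly. This is the natural strategy, and you correctly identify the only real work as the bookkeeping: confirming that none of the implicit constructions (the quasi-partition embedding, the witness function $g$, the set $\Uf_\sigma$, and the finite-set reduction in part~4 of Lemma~\ref{lem:uf_basic}) require more than $\Delta_0$-induction once the exponential bounds are in place. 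Your proposal is a plan, not a proof, and you acknowledge as much; but it is the right plan, and it goes further than the paper does.
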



\section{Fishburn's possibility theorem}
\label{sec:fpt}

The main result of this section, theorem~\ref{thm:FPT_equiv_ACA0} is that
Fishburn's possibility theorem for countable societies is equivalent to $\ACA_0$
over $\RCA_0$.
We also show that non-dictatorial social welfare functions actually satisfy more
general non-dictatoriality conditions than Arrow's original condition
(lemma~\ref{lem:nd_equiv_cofcoal}).

\begin{dfn}
    \label{dfn:fpt}
    \emph{Fishburn's possibility theorem for countable societies}
    ($\FPT$) is the following statement: For all countable societies
    $\mathcal{S} = \str{V,X,\mathcal{A},\mathcal{F}}$ where $V$ is infinite,
    there exists a non-dictatorial social welfare function $\sigma$ for
    $\mathcal{S}$.
    
    A social welfare function $\sigma$ for $\mathcal{S}$ is
    \emph{$k$-non-dictatorial} if for all $s \in \Seq(V)$ such that
    $\lh{s} \leq k$, there exists $j$ and $x,y \in X$ such that for all
    $i < \lh{s}$, $x <_{j(s(i))} y$ and $y <_{\sigma(j)} x$.
    $\FPT^k$ is the statement obtained by replacing non-dictatoriality
    in $\FPT$ with $k$-non-dictatoriality for some fixed $k \geq 1$.
    
    $\sigma$ is \emph{finitely non-dictatorial} if for all $k \geq 1$, $\sigma$
    is $k$-non-dictatorial.
    $\FPT^{<\N}$ is the statement obtained by replacing
    non-dictatoriality in $\FPT$ with finite non-dictatoriality.
    
    $\sigma$ has the \emph{cofinite coalitions property} if for every profile
    $j \in \N$, if cofinitely many $v \in V$ are such that $x <_{j(v)} y$, then
    $x <_{\sigma(j)} y$.
    $\FPT^+$ is the statement obtained by replacing non-dictatoriality
    in $\FPT$ with the cofinite coalitions property.
\end{dfn}

One concern with the interpretation of Fishburn's possibility theorem has been
that the choice of ultrafilter seems arbitrary. When faced with an infinite set
with a complement of the same cardinality, there seems to be no reason to
consider one to genuinely constitute a majority rather than the other.
This is not the case for cofinite sets which, in an infinite society, clearly
constitute a majority. A social welfare function with the cofinite coalitions
property therefore satisfies a version of Condorcet consistency: if a majority
(a cofinite set) of voters prefer $x$ to $y$, then so does the social welfare
function.
Since an ultrafilter on a given algebra is non-principal exactly when it refines
the Fréchet filter, the cofinite coalitions property is also the strongest
non-dictatoriality property a social welfare function can have. We now show that
all non-dictatorial social welfare functions have this property.

\begin{lem}
    \label{lem:nd_equiv_cofcoal}
    The following is provable in $\RCA_0$.
    Suppose $\mathcal{S}$ is a countable society and $\sigma$ is a social
    welfare function for $\mathcal{S}$.
    Then the following conditions are equivalent.
    \begin{enumerate}
        \item $\sigma$ is non-dictatorial.
        \item $\sigma$ is $k$-non-dictatorial for some fixed $k \geq 1$.
        \item $\sigma$ is finitely non-dictatorial.
        \item $\sigma$ has the cofinite coalitions property.
    \end{enumerate}
\end{lem}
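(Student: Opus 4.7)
The plan is to prove the four conditions equivalent via the cycle $(1) \Rightarrow (4) \Rightarrow (3) \Rightarrow (2) \Rightarrow (1)$. Two of the implications follow directly from the definitions: $(3) \Rightarrow (2)$ by taking $k = 1$, and $(2) \Rightarrow (1)$ by applying $k$-non-dictatoriality to each singleton sequence $\str{v}$ to produce a profile $j$ and alternatives $x,y$ with $x <_{j(v)} y$ and $y <_{\sigma(j)} x$, which witnesses that $v$ is not a dictator.

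The central step is $(1) \Rightarrow (4)$, where I invoke the Kirman--Sondermann correspondence from theorem~\ref{thm:KS_RCA0}. Since $\sigma$ is non-dictatorial, $\Uf_\sigma$ is a non-principal ultrafilter on $\mathcal{A}$. Given a profile $j$ and alternatives $x,y$ such that $C = \{v \in V : x <_{j(v)} y\}$ is cofinite, lemma~\ref{lem:unif_embed} produces an index $k$ with $A_k = C$; then $A_k^c$ is finite, and by part~4 of lemma~\ref{lem:uf_basic} a non-principal ultrafilter on an atomic algebra omits every index of a finite set, so $A_k^c \notin \Uf_\sigma$. Maximality then forces $k \in \Uf_\sigma$, and theorem~\ref{thm:KS_RCA0} identifies $A_k$ as $\sigma$-decisive, yielding $x <_{\sigma(j)} y$.

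For $(4) \Rightarrow (3)$, I first observe that (4) forces $V$ to be infinite: otherwise every subset of $V$ is cofinite and (4) would contradict the strong connectedness of the weak order $\sigma(j)$. Fix $k \geq 1$ and $s \in \Seq(V)$ with $\lh{s} \leq k$, and let $R = \{ s(i) : i < \lh{s} \}$, a finite subset of $V$. By atomicity together with closure of $\mathcal{A}$ under finite unions there is an index $n$ with $A_n = R$. Pick distinct $x,y \in X$ and a permutation $p$ of $W$ with $p(0) = x < y < \ast$ and $p(1) = y < x < \ast$; quasi-partition embedding then supplies a profile $f_j$ with $f_j(v) = p(0)$ on $A_n$ and $f_j(v) = p(1)$ on $A_n^c$. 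Since $V$ is infinite and $A_n$ is finite, $\{ v : y <_{j(v)} x \} = V \setminus A_n$ is cofinite, so (4) delivers $y <_{\sigma(j)} x$, while $x <_{j(s(i))} y$ for every $i < \lh{s}$ by construction.

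The main obstacle is the $(1) \Rightarrow (4)$ step, which requires simultaneously marshalling the Kirman--Sondermann correspondence and the characterisation in lemma~\ref{lem:uf_basic} of principality as membership of the index of some finite set; once these are in hand, the remaining work reduces to routine constructions of profiles via quasi-partition embedding.
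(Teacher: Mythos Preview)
Your proof is correct and follows the same cycle $(1)\Rightarrow(4)\Rightarrow(3)\Rightarrow(2)\Rightarrow(1)$ as the paper, with the substantive step $(1)\Rightarrow(4)$ handled in the same way via theorem~\ref{thm:KS_RCA0} and part~4 of lemma~\ref{lem:uf_basic}. The only differences are cosmetic: you obtain the index of the cofinite coalition from uniform measurability (lemma~\ref{lem:unif_embed}) where the paper instead uses a tail set $\{v \in V : v \geq k\}$ built from atomicity and closure, and you spell out the construction for $(4)\Rightarrow(3)$ explicitly where the paper simply calls it immediate.
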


\begin{proof}
    The implications from 4 to 3, 3 to 2, and 2 to 1 are immediate.
    Working in $\RCA_0$, we show that 1 implies 4.
    Let $\mathcal{S} = \str{V,X,\mathcal{A},\mathcal{S}}$ be a countable society
    and let $\sigma$ be a non-dictatorial social welfare function for
    $\mathcal{S}$. By $\KS$ (theorem~\ref{thm:KS_RCA0}) the ultrafilter
    $\Uf_\sigma$ of (indexes of) $\sigma$-decisive coalitions exists and is
    non-principal. Moreover, $V$ is infinite by Arrow's theorem.
    
    Fix an arbitrary profile $f_m$ and two alternatives $x, y \in X$, and
    suppose that for some $k$, if $v \in V$ is such that $v \geq k$ then
    $x <_{m(v)} y$.
    By the closure of $\mathcal{A}$ under finite unions and relative complements
    there exists a $j$ such that $A_j = \{ v \in V : v \geq k \}$, which is
    cofinite since $V$ is infinite. Since $\Uf_\sigma$ is non-principal,
    $j \in \Uf_\sigma$ by part~4 of lemma~\ref{lem:uf_basic}. Therefore, $A_j$
    is $\sigma$-decisive and $x <_{\sigma(m)} y$.
\end{proof}

The following lemma~\ref{lem:cKS_RCA0} is a partial converse of the
Kirman--Sondermann theorem for countable societies---partial because for any
given ultrafilter $\Uf$ there may be distinct social welfare functions with
$\Uf$ as their set of decisive coalitions.
Various restrictions allow a one-to-one correspondence between ultrafilters and
social welfare functions to be recovered, for example by restricting to profiles
and social welfare functions which output linear orders as in 
\cite[theorem~6.1.3]{Taylor2005}, or by imposing a monotonicity condition as in
\cite{Armstrong1985}.

These restrictions are less interesting from a computability-theoretic point of
view, since the resulting bijective functionals between ultrafilters and social
welfare functions are themselves computable, while without these restrictions
there are social welfare functions $\sigma$ such that $\Uf_\sigma \Tlt \sigma$.
This can occur most strikingly when $\sigma$ is dictatorial, and hence
$\Uf_\sigma$ is computable (since to compute membership in $\Uf_\sigma$ one
simply needs to check for any given $A_i$ if $d \in A_i$, where $d$ is the
dictator). There will remain infinitely many profiles $f_i$ and alternatives
$x,y$ such that neither $\mu_<(i,x,y)$ nor $\mu_<(i,y,x)$ are in $\Uf_\sigma$.
Some of these gaps of indifference can be filled in by appealing to another,
non-principal and non-computable ultrafilter, resulting in a social welfare
function that is dictatorial but not computable. For details of this
construction see proposition~1 of \cite{Mihara1997}.

\begin{lem}
    \label{lem:cKS_RCA0}
    The following statement is provable in $\RCA_0$.
    Suppose $\mathcal{S} = \str{V,X,\mathcal{A},\mathcal{F}}$ is a countable
    society.
    If $\Uf$ is an ultrafilter on $\mathcal{A}$, then there exists a
    social welfare function $\sigma_\Uf$ for $\mathcal{S}$ with the
    following properties.
    \begin{enumerate}
        \item For all $i \in \N$, $i \in \Uf$ if and only if $A_i$ is
            $\sigma_\Uf$-decisive.
        \item The following conditions are equivalent:
            \begin{enumerate}
                \item $\Uf$ is non-principal,
                \item $\sigma_\Uf$ has the cofinite coalitions property.
            \end{enumerate}
    \end{enumerate}
\end{lem}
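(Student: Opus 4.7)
The plan is to define $\sigma_\Uf$ by using the uniform measurability witness $\mu$ supplied by definition~\ref{dfn:uniform_measurability} to reduce the question ``does $\sigma_\Uf(i)$ rank $x$ at least as high as $y$?'' to ``is $\mu(i,x,y) \in \Uf$?''. Since $X$ is finite, for each $i$ the code of $\sigma_\Uf(i) \in W$ can be computed from $i$, $\mu$, and membership in $\Uf$ by bounded search over $X \times X$, so $\sigma_\Uf$ exists by recursive comprehension.

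The first verification is that each $\sigma_\Uf(i)$ really is a weak order on $X$. Strong connectedness follows from the maximality axiom together with upward closure, since for every $v \in V$ at least one of $x \lesssim_{i(v)} y$ and $y \lesssim_{i(v)} x$ holds. Transitivity reduces to the intersection axiom plus upward closure, using that transitivity of each $f_i(v)$ yields $A_{\mu(i,x,y)} \cap A_{\mu(i,y,z)} \subseteq A_{\mu(i,x,z)}$. Unanimity and independence are then immediate from the definition, noting that whenever $A_j = A_k$, upward closure gives $j \in \Uf \Leftrightarrow k \in \Uf$.

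For part~(1), the forward direction uses upward closure to pass from $A_n \subseteq \{v : x \lesssim_{i(v)} y\}$ to $\mu(i,x,y) \in \Uf$, together with properness to rule out $\mu(i,y,x) \in \Uf$ via $A_n \cap A_{\mu(i,y,x)} = \emptyset$. The backward direction is a quasi-partition embedding argument: build a profile $f_m$ in which every voter in $A_n$ strictly prefers $x$ to $y$ and every voter outside $A_n$ strictly prefers $y$ to $x$, then apply decisiveness to conclude $x <_{\sigma_\Uf(m)} y$. By construction $A_{\mu(m,x,y)} = A_n$, so $n \in \Uf$ after another invocation of upward closure.

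For part~(2), the direction ``non-principal $\Rightarrow$ cofinite coalitions property'' uses part~4 of lemma~\ref{lem:uf_basic}: in a non-principal ultrafilter no finite set is indexed by an element of $\Uf$, so by maximality every cofinite set is; thus a cofinite majority for $x$ over $y$ gives $\mu(j,x,y) \in \Uf$ and $\mu(j,y,x) \notin \Uf$. The converse runs by contrapositive: if $\Uf$ is principal with dictator $d$, quasi-partition embedding yields a profile $f_m$ on which $V \setminus \{d\}$ strictly prefers $x$ to $y$ while $d$ strictly prefers $y$ to $x$, and part~(1) then forces $y <_{\sigma_\Uf(m)} x$, contradicting the cofinite coalitions property. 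I expect the main obstacle throughout to be bookkeeping: the same set in $\mathcal{A}$ has many indices, so essentially every use of an ultrafilter axiom must be mediated by a quick appeal to upward closure in order to move between indices of equal sets.
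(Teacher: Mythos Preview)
Your proposal is correct and follows essentially the same approach as the paper: define $\sigma_\Uf(i)$ by $x \lesssim_{\sigma_\Uf(i)} y \Leftrightarrow \mu(i,x,y) \in \Uf$, verify the weak-order axioms via maximality/intersection/upward closure, derive unanimity and independence directly, and prove part~(1) with upward closure plus properness in one direction and a quasi-partition-embedded profile in the other. The one minor divergence is in (2)(b)$\Rightarrow$(a): the paper argues directly that the cofinite coalitions property forces every cofinite $A_i$ into $\Uf$ (via a quasi-partition profile with $A_{\mu(j,x,y)} = A_i$), whereas you take the contrapositive and exhibit a single profile witnessing failure when $\Uf$ is principal; both arguments are short and interchangeable.
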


\begin{proof}
    Working in $\RCA_0$, let $\mathcal{S} = \str{V,X,\mathcal{A},\mathcal{F}}$
    be a countable society and $\Uf \subseteq \N$ be an ultrafilter on
    $\mathcal{A}$.
    
    Let $\varphi(n,R)$ be the following $\Sigma^0_0$ formula in the displayed
    free variables.
    \begin{equation*}
        \varphi(n,R) \equiv (\forall{x,y} \in X)(
            (x,y) \in R \leftrightarrow \mu(n,x,y) \in \Uf
        ).
    \end{equation*}
    Note that here we are considering $R$ as a natural number coding a finite
    set.
    Let $b$ code the finite set $X \times X$. Since our coding of finite sets by
    natural numbers is monotonic, $b \geq R'$ for all $R' \in W$.
    By $\Sigma^0_1$ induction, for all $n$ there exists $R \leq b$ such that
    $\varphi(R,n)$. This is just an application of comprehension for codes of
    finite sets; for details see e.g.\ \cite{HajekPudlak1993}.
    
    
    We show that $R$ is a weak order.
    To show strong connectedness, let $x,y \in X$ be arbitrary.
    If $x = y$ then since $x \lesssim_{n(v)} x$ for all $n \in \N$ and
    $v \in V$, we have that $A_{\mu(n,x,x)} = V$, so $\mu(n,x,x) \in \Uf$ by
    non-emptiness and thus $(x,x) \in R$.
    Suppose instead that $x \neq y$, let $i = \mu(n,x,y)$ and let $j$ be such
    that $A_j = A_i^c$. Since $\Uf$ is an ultrafilter, by maximality either
    $i \in \Uf$ or $j \in \Uf$. If $i \in \Uf$ then $(x,y) \in R$, so assume the
    latter. $A_j = A_{\mu_<(n,y,x)} \subseteq A_{\mu(n,y,x)}$, so
    $\mu(n,y,x) \in \Uf$ by upwards closure, establishing that $(y,x) \in R$.
    
    For transitivity, suppose $(x,y) \in R$ and $(y,z) \in R$, so
    $\mu(n,x,y) \in \Uf$ and $\mu(n,y,z) \in \Uf$. Let $j$ be such that
    $A_j = A_{\mu(n,x,y)} \cap A_{\mu(n,y,z)}$, so $j \in \Uf$ by closure under
    intersections. Then $x \lesssim_{n[A_j]} y$ and $y \lesssim_{n[A_j]} z$, so
    by transitivity we have that
    $x \lesssim_{n[A_j]} z$. Thus, $A_j \subseteq A_{\mu(n,x,z)}$ and
    $\mu(n,x,z) \in \Uf$ by upwards closure.
    
    
    This lets us define $\sigma \subseteq \N$ by
    \begin{align*}
        (n,R) \in \sigma
            \leftrightarrow
            R = \min R' \text{ such that } \varphi(R',n).
    \end{align*}
    Since $W$ is finite, the use of minimisation is bounded and so the
    definition of $\sigma$ is $\Sigma^0_0$ in the parameters $\mu_<$ and $\Uf$,
    meaning that $\sigma$ exists by recursive comprehension.
    By the claim, $\sigma \subseteq \N \times W$ and for all $n \in \N$ there
    exists $R \in W$ such that $(n,R) \in \sigma$. Thus, since minimisation is a
    function, so is $\sigma$, i.e.\ $\sigma : \N \to W$.
    
    
    We now show that $m \in \Uf$ if and only if $A_m$ is $\sigma$-decisive.
    For the forwards direction, suppose $m \in \Uf$ and $x,y \in X$ and
    $n \in \N$ are such that $x <_{n[A_m]} y$. By this hypothesis,
    $A_m \subseteq A_{\mu(n,x,y)}$, so $\mu(n,x,y) \in \Uf$ by upwards closure.
    $A_{\mu(n,x,y)} = A_{\mu_<(n,x,y)} \cup A_{\mu_\sim(n,x,y)}$, and thus
    either $\mu_<(n,x,y) \in \Uf$ or $\mu_\sim(n,x,y) \in \Uf$ by part~2 of
    lemma~\ref{lem:uf_basic}. Suppose the latter. By hypothesis,
    $A_{\mu_\sim(n,x,y)} \cap A_m = \emptyset$, and since $\Uf$ is closed under
    intersections it would have to contain an index for $\emptyset$,
    contradicting properness. So $\mu_<(n,x,y) \in \Uf$,
    $\mu_\sim(n,x,y) \not\in \Uf$, and $\mu_<(n,y,x) \not\in \Uf$, which
    establishes that $x <_{\sigma(n)} y$ by the definition of $\sigma$.
    For the reverse direction, suppose $A_m$ is $\sigma$-decisive and let
    $x,y \in X$ be arbitrary. By quasi-partition embedding there exists $f_k$
    such that $x <_{k(v)} y$ if $v \in A_m$, and $y <_{k(v)} x$ if
    $v \in A_m^c$. By $\sigma$-decisiveness, $x <_{\sigma(k)} y$, so
    $\mu(k,x,y) \in \Uf$. $A_m = A_{\mu(k,x,y)}$, so $m \in \Uf$ by upwards
    closure.
    
    
    To show that $\sigma$ satisfies unanimity, let $x,y \in X$ and $f_n$ be
    arbitrary, and suppose that $x <_{n[V]} y$. Because $A_{\mu(n,x,y)} = V$
    by uniform $\mathcal{A}$-measurability, it follows by the non-emptiness
    condition for $\Uf$ that $\mu(n,x,y) \in \Uf$. Moreover, we also have that
    $A_{\mu_<(n,y,x)} = A_{\mu_\sim(n,x,y)} = \emptyset$, so
    $\mu_<(n,y,x) \not\in \Uf$ and $\mu_\sim(n,x,y) \not\in \Uf$. It follows
    that by the construction of $\sigma$, $x <_{\sigma(n)} y$.
    
    
    To show that $\sigma$ satisfies independence, let $x,y \in X$ and suppose
    $f_i = f_j$ on $\{ x, y \}$. $A_{\mu(i,x,y)} = A_{\mu(j,x,y)}$ by uniform
    $\mathcal{A}$-measurability. Upwards closure of $\Uf$ under $\subseteq$ then
    gives us that $\mu(i,x,y) \in \Uf \leftrightarrow \mu(j,x,y)$. By the
    construction of $\sigma$, $x \lesssim_{\sigma(i)} y \leftrightarrow
    x \lesssim_{\sigma(j)} y$ as desired.
    
    
    Finally we prove that $\Uf$ is non-principal if and only if $\sigma$ has the
    cofinite coalitions property.
    For the forwards direction, suppose $\Uf$ is non-principal and let $A_i$
    be cofinite, so $i \in \Uf$ by part~4 of lemma~\ref{lem:uf_basic}. Suppose
    that $x <_{k[A_i]} y$ for some $x,y \in X$ and $k \in \N$. Since
    $i \in \Uf$, $A_i$ is $\sigma$-decisive, and so $x <_{\sigma(k)} y$.
    For the backwards direction, suppose $\sigma$ has the cofinite coalitions
    property and let $A_i$ be cofinite. By quasi-partition embedding, let $j$ be
    such that $A_{\mu(j,x,y)} = \{ v : x <_{j(v)} y \} = A_i$.
    $x <_{\sigma(j)} y$ by the cofinite coalitions property since $A_i$ is
    cofinite, so $\mu(j,x,y) \in \Uf$, and hence $i \in \Uf$ by upwards closure
    under $\subseteq$. Since $i$ was arbitrary, $\Uf$ is non-principal by part~4
    of lemma~\ref{lem:uf_basic}.
\end{proof}

\begin{thm}
    \label{thm:FPT_equiv_ACA0}
    The following are equivalent over $\RCA_0$.
    \begin{enumerate}
        \item $\FPT$.
        \item $\FPT^k$ for any $k \geq 1$.
        \item $\FPT^{<\N}$.
        \item $\FPT^+$.
        \item Arithmetical comprehension.
    \end{enumerate}
\end{thm}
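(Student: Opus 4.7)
The plan is to collapse clauses (1)--(4) into a single statement using Lemma~\ref{lem:nd_equiv_cofcoal}, and then establish the equivalence of that statement with $\ACA_0$ (clause (5)) by combining Lemma~\ref{lem:uf_exist_equiv_ACA0}, Lemma~\ref{lem:cKS_RCA0} and the Kirman--Sondermann theorem. The implications $(4) \Rightarrow (3) \Rightarrow (2) \Rightarrow (1)$ are immediate since each non-dictatoriality condition in $\FPT^+, \FPT^{<\N}, \FPT^k, \FPT$ strengthens its successor. For $(1) \Rightarrow (4)$, any non-dictatorial SWF supplied by $\FPT$ for a given society $\mathcal{S}$ already has the cofinite coalitions property by Lemma~\ref{lem:nd_equiv_cofcoal}, so it witnesses $\FPT^+$ for $\mathcal{S}$.

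For $(5) \Rightarrow (1)$, I would work in $\ACA_0$ and take any countable society $\mathcal{S} = \str{V, X, \mathcal{A}, \mathcal{F}}$ with $V$ infinite. Lemma~\ref{lem:uf_exist_equiv_ACA0} produces a non-principal ultrafilter $\Uf$ on $\mathcal{A}$, and Lemma~\ref{lem:cKS_RCA0} then converts $\Uf$ into a SWF $\sigma_\Uf$ for $\mathcal{S}$ with the cofinite coalitions property; such a $\sigma_\Uf$ is in particular non-dictatorial, so $\FPT$ holds.

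The harder direction is $(1) \Rightarrow (5)$. By Lemma~\ref{lem:uf_exist_equiv_ACA0} it suffices, working in $\RCA_0 + \FPT$, to construct a non-principal ultrafilter on an arbitrary atomic countable algebra $\mathcal{A}$ over an arbitrary infinite $V \subseteq \N$. The strategy is to build a canonical countable society $\mathcal{S}^*(V, \mathcal{A})$ from these data and then extract the ultrafilter from a non-dictatorial SWF provided by $\FPT$ via the Kirman--Sondermann theorem. Fix $X = \{0, 1, 2\}$ with its associated (fixed, finite) set $W$ of weak orders, and let $\mathcal{F} = \str{f_i : i \in \N}$ be obtained from a primitive recursive enumeration of the pairs $(p, s)$ with $p \in \mathrm{Perm}(W)$ and $s \in \mathrm{QPart}(\lh{W})$ by setting $f_i$ to be the profile dictated by the formula of Definition~\ref{dfn:qp_embedding}. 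Quasi-partition embedding of $\mathcal{A}$ into $\mathcal{F}$ then holds by construction, while for uniform $\mathcal{A}$-measurability one observes that for each $i$ corresponding to $(p, s)$ and each $x, y \in X$, the set $\{v \in V : x \lesssim_{f_i(v)} y\}$ is a finite boolean combination of the $A_{s(j)}$ appearing in $s$, and an $\mathcal{A}$-index for this combination is uniformly computable from $(p, s, x, y)$ using the closure operations of $\mathcal{A}$. Applying $\FPT$ to $\mathcal{S}^*(V, \mathcal{A})$ yields a non-dictatorial SWF $\sigma$, and Theorem~\ref{thm:KS_RCA0} then gives the desired non-principal ultrafilter $\Uf_\sigma$ on $\mathcal{A}$.

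The main obstacle is the verification that $\mathcal{S}^*(V, \mathcal{A})$ really is a countable society provably in $\RCA_0$, and in particular the construction of the measurability witness $\mu$: one must unpack the quasi-partition embedding formula to show that ``$x \lesssim_{f_i(v)} y$'' reduces to a primitive-recursively computable boolean combination of the atomic predicates ``$v \in A_{s(j)}$'' (the combinatorics of ``$v$ lies in a unique $A_{s(j)}$ for $j$ below the cutoff'' being the main case to handle), and then feed this combination through the boolean embedding machinery of Lemma~\ref{lem:boolean_embedding} to produce an explicit $\mathcal{A}$-index. This is bookkeeping rather than a genuine mathematical difficulty, but it is where the definition of a countable society has to do its work.
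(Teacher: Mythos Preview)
Your proposal is correct and follows essentially the same route as the paper: collapse (1)--(4) via Lemma~\ref{lem:nd_equiv_cofcoal}, derive $\FPT^+$ from $\ACA_0$ using Lemma~\ref{lem:uf_exist_equiv_ACA0} and Lemma~\ref{lem:cKS_RCA0}, and for the reversal build a society over the given algebra, apply $\FPT$, and invoke $\KS$. The only organisational difference is that the paper isolates your inline construction of $\mathcal{S}^*(V,\mathcal{A})$ and its measurability witness as a standalone Lemma~\ref{lem:partition_embedding}, which is precisely the bookkeeping step you flag as the main obstacle.
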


\begin{lem}
    \label{lem:partition_embedding}
    The following is provable in $\RCA_0$.
    Suppose $V \subseteq \N$ is nonempty and $X \subseteq \N$ is finite with
    $\lh{X} \geq 3$ and $\mathcal{A} = \str{ A_i : i \in \N }$ is a countable
    algebra over $V$. Then there exists a sequence $\mathcal{F} =
    \str{ f_i : i \in \N }$ of profiles over $V,X$ such that $\mathcal{F}$ is
    uniformly $\mathcal{A}$-measurable and $\mathcal{A}$ is quasi-partition
    embedded into $\mathcal{F}$.
\end{lem}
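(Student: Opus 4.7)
The plan is to construct, for each pair $(p,s) \in \mathrm{Perm}(W) \times \mathrm{QPart}(\lh{W})$, an explicit profile $f_{e(p,s)}$ realising the defining equation of quasi-partition embedding, and then verify that the resulting sequence $\mathcal{F}$ is uniformly $\mathcal{A}$-measurable. The key observation is that every region of $V$ needed by either condition is a bounded Boolean combination of finitely many of the sets $A_{s(i)}$, hence is itself obtainable as an element of $\mathcal{A}$ via Lemma \ref{lem:boolean_embedding}.

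First I would fix a computable bijection between $\N$ and $\mathrm{Perm}(W) \times \mathrm{QPart}(\lh{W})$, and by Lemma \ref{lem:boolean_embedding} assume without loss of generality that complementation, binary intersection, and binary union are uniformly computable on indices of $\mathcal{A}$. Given $(p,s)$ and setting $k = \lh{s} - 1$, for each $j < k$ let
\[
    U_j = A_{s(j)} \cap \bigcap_{\substack{i < k \\ i \neq j}} A_{s(i)}^c,
\]
and let $U_k = V \setminus \bigcup_{j<k} U_j$. The $U_j$ for $j < k$ are pairwise disjoint, together with $U_k$ they partition $V$, and each of them is obtainable as an index in $\mathcal{A}$ by a bounded recursion on $k$ using the uniform Boolean operations. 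Define
\[
    f_{e(p,s)}(v) = \begin{cases} p(j) & \text{if } v \in U_j \text{ for some } j < k, \\ p(k) & \text{if } v \in U_k. \end{cases}
\]
Since membership $v \in A_{s(i)}$ is decidable from the coding of $\mathcal{A}$ as a sequence of sets, the profile $f_{e(p,s)}$ is well-defined, and the sequence $\mathcal{F} = \str{f_i : i \in \N}$ exists by $\Delta^0_1$-comprehension. Quasi-partition embedding is then immediate from the definition of $e$.

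For uniform measurability, given $n$ decoded as $(p,s)$ and $x,y \in X$, I compute the finite set $J = \{ j \leq k : x \lesssim_{p(j)} y \}$ by inspection of each weak order in the range of $p$. Because the $U_j$ partition $V$, the preimage $\{ v \in V : x \lesssim_{n(v)} y \}$ equals $\bigcup_{j \in J} U_j$, and its index in $\mathcal{A}$ is obtained by finitely many applications of binary union; set $\mu(n,x,y)$ to this index.

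The main obstacle is purely notational: because quasi-partitions need not partition $V$, one must carve out the unique-membership sets $U_j$ and the residual $U_k$, and the Boolean combinations involved have length growing with $\lh{s}$. Care is therefore needed to write the construction as a recursion in $\lh{s}$ so that $e$ and $\mu$ are total and $\Delta^0_1$-definable, which is routine once Lemma \ref{lem:boolean_embedding} is in hand.
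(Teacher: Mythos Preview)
Your proposal is correct and follows essentially the same route as the paper: enumerate $\mathrm{Perm}(W)\times\mathrm{QPart}(\lh{W})$, define each profile by the unique-membership case analysis on the sets $A_{s(i)}$, and recover the measurability witness $\mu$ by taking the union of those partition cells $U_j$ whose associated order $p(j)$ satisfies $x\lesssim_{p(j)} y$. The only difference is presentational: the paper writes out explicit boolean formation sequences and a primitive-recursive helper $h$ to build $\mu$, whereas you invoke Lemma~\ref{lem:boolean_embedding} once and then argue informally that the needed Boolean combinations are uniformly obtainable, which is cleaner but amounts to the same thing.
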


\begin{proof}
    We first apply lemma~\ref{lem:boolean_embedding} to replace $\mathcal{A}$
    with an extensionally equivalent algebra $\mathcal{A}'$ in which we can
    uniformly compute boolean combinations via a boolean embedding. We abuse
    notation in the remainder of this proof by referring to $\mathcal{A}$ rather
    than $\mathcal{A}'$.
    
    The infinite set $\mathrm{Perm}(W) \times \mathrm{QPart}(\lh{W})$ exists by
    recursive comprehension, and by primitive recursion there exists a function
    $\mathit{en} : \N \to \mathrm{Perm}(W) \times \mathrm{QPart}(\lh{W})$
    enumerating it.
    Let $\theta(n,v,w)$ be a $\Sigma^0_0$ formula which says that
    $\mathit{en}(n) = (p,s)$ and either there exists a unique $j<\lh{s}-1$ such
    that $v \in A_{s(j)}$ and $w = p(i)$, or there exists no such unique $j$ and
    $w = p(\lh{s}-1)$.
    The set $\mathcal{F} = \{ (n,v,w) : \theta(n,v,w) \}$ exists by recursive
    comprehension and codes a sequence of profiles $\str{ f_i : i \in \N }$.
    
    We now show that $e = \mathit{en}^{-1}$ is a quasi-partition embedding of
    $\mathcal{A}$ into $\mathcal{F}$. Let $p$ be a permutation of $W$, $s$ a
    quasi-partition, and $k = e(p,s)$. Suppose $v \in V$. We reason by cases.
    \begin{enumerate}
        \item Suppose there exists a unique $j<\lh{s}-1$ such that
            $v \in A_{s(j)}$. Then $(k,v,p(j)) \in \mathcal{F}$ by the
                construction of $\mathcal{F}$, i.e.\ $f_k(v) = p(j)$.
        \item Now suppose there is no such $j$. Then
            $(k,v,p(\lh{s}-1)) \in \mathcal{F}$ by the construction of
            $\mathcal{F}$, i.e.\ $f_k(v) = p(\lh{s}-1)$.
    \end{enumerate}
    
    Finally we show that $\mathcal{F}$ is uniformly $\mathcal{A}$-measurable.
    Fix $x,y \in X$ and a profile $f_n$. By the construction of $\mathcal{F}$,
    $\mathit{en}(n) = (s,p)$ for some quasi-partition $s$ and permutation $p$ of
    $W$.
    For all $j < \lh{s}$, let $t^j$ be a boolean formation sequence for the set
    \begin{equation*}
        A_{s(j)} \setminus
            \bigcup_{i < \lh{s}-1} \left\{\begin{array}{ll}
                A_{s(i)}  & \text{if } i \neq j, \\
                \emptyset & \text{otherwise}.
            \end{array}\right.
    \end{equation*}
    and given boolean formation sequences $t_1$ and $t_2$, let
    \begin{equation*}
        \begin{split}
        u(t_1,t_2) = t_1 \concat t_2 \concat \str{&
            (1,\lh{t_1}-1,\lh{t_1}-1), \\
            & (1,\lh{t_1}+\lh{t_2}-1,\lh{t_1}+\lh{t_2}-1), \\
            & (2,\lh{t_1}+\lh{t_2},\lh{t_1}+\lh{t_2}+1)
        }.
        \end{split}
    \end{equation*}
    
    Let $h_0(s,p,x,y) = \str{}$ and
    \begin{equation*}
        h_r(t,m,s,p,x,y) = \begin{cases}
            u(t,t^j) & \text{if } x <_{p(s(m-1))} y,  \\
            t        & \text{otherwise}.
        \end{cases}
    \end{equation*}
    Let $h$ be the function defined by primitive recursion from $h_0$ and $h_r$.
    Define $\mu : \N \times X \times X \to \N$ by
    $\mu(n,x,y) = e^*(h(\lh{s},s,p,x,y))$,
    where $e^* : \mathrm{BFS} \to \N$ is a boolean embedding of $\mathcal{A}$
    into itself (this exists by lemma~\ref{lem:boolean_embedding}).
    We can then verify by $\Sigma^0_0$ induction that $\mathcal{F}$ is uniformly
    $\mathcal{A}$-measurable via $\mu$.
\end{proof}

\begin{proof}[Proof of theorem~\ref{thm:FPT_equiv_ACA0}]
    Statements 1, 2, 3, and 4 are equivalent by
    lemma~\ref{lem:nd_equiv_cofcoal}.
    To complete the proof it suffices to show that 5 implies 4 and 1 implies 5.
    To show that 5 implies 4, we work in $\ACA_0$ and suppose that $\mathcal{S}
    = \str{V,X,\mathcal{A},\mathcal{F}}$ is a countable society and that $V$ is
    infinite.
    By lemma~\ref{lem:uf_exist_equiv_ACA0}, there exists a non-principal
    ultrafilter $\Uf$ on $\mathcal{A}$, and hence by lemma~\ref{lem:cKS_RCA0}
    there exists a social welfare function $\sigma_\mathcal{U}$ for
    $\mathcal{S}$ with the cofinite coalitions property.
    
    Finally we show that 1 implies 5. Working in $\RCA_0$, let $V \subseteq \N$
    be infinite and let $\mathcal{A}$ be a countable atomic algebra over $V$.
    Fix $X = \{ x, y, z \}$.
    By lemma~\ref{lem:partition_embedding} there exists a countable sequence of
    profiles $\mathcal{F}$ over $V,X$ such that $\mathcal{A}$ is quasi-partition
    embedded into $\mathcal{F}$ and $\mathcal{F}$ is uniformly
    $\mathcal{A}$-measurable.
    $\mathcal{S} = \str{V,X,\mathcal{A},\mathcal{F}}$ is thus a countably
    infinite society, and so by $\FPT$ there exists a non-dictatorial
    social welfare function $\sigma$ for $\mathcal{S}$.
    By $\KS$ (theorem~\ref{thm:KS_RCA0}), there exists an ultrafilter
    $\Uf_\sigma$ on $\mathcal{A}$ which is non-principal since $\sigma$ is
    non-dictatorial. Since $\mathcal{A}$ is an arbitrary infinite atomic
    algebra, this implies arithmetical comprehension by
    lemma~\ref{lem:uf_exist_equiv_ACA0}.
\end{proof}

We conclude this section with a few remarks on the computability-theoretic
status of $\FPT$.
Early work in effectivising social choice theory emphasised the
non-computability of non-dictatorial social welfare functions, and thus an
extension of Arrow's theorem from finite sets to computable sets.
For example, Mihara~\cite{Mihara1997} showed that when $V = \N$, $\mathcal{A} =
\REC$, and $\mathcal{F}$ consists of all $\mathcal{A}$-measurable profiles, any
non-dictatorial social welfare function for this society is non-computable.
In the present setting this is not automatic: there are countable societies
with computable non-dictatorial social welfare functions. The natural minimal
example of this is provided by a society based on a computable presentation of
the finite--cofinite algebra. There is a single non-principal ultrafilter on
this algebra, and both it and the non-dictatorial social welfare function
derived from it via the construction in lemma~\ref{lem:cKS_RCA0} are
computable.

On the other hand, there are recursive counterexamples to Fishburn's possibility
theorem far less complex than the societies considered by Lewis~\cite{Lewis1988}
or Mihara~\cite{Mihara1997} which we discussed in \S\ref{sec:intro}.
The following argument is based on Kirby's proof of the reverse direction of
lemma~\ref{lem:uf_exist_equiv_ACA0} \cite[theorem~1.10]{Kirby1984}.
Let $h : \N \to \N$ be a computable enumeration of the halting problem $\zeroj$.
Using lemma~\ref{lem:boolean_embedding} we computably embed a sequence of sets
$B = \str{ B_i : i \in \N }$ into a countable atomic algebra $\mathcal{A}$,
where $B$ is defined by
\begin{equation*}
    B =      \{ (2n,   v) : (\exists{m < v})(h(m) = n) \}
        \cup \{ (2n+1, n) : n \in \N \}.
\end{equation*}
By lemma~\ref{lem:partition_embedding} there exists a countable society
$\mathcal{S} = \str{\N,3,\mathcal{A},\mathcal{F}}$. We can then construct a
primitive recursive function $g : \N \to \N$ that computes the indexes of a
family of profiles such that $x <_{g(n)(v)} y$ if $v \in B_{2n}$, and
$y <_{g(n)(v)} x$ otherwise.
If $\sigma$ is any non-dictatorial social welfare function for $\mathcal{S}$,
then $\zeroj \Tleq \sigma$, since $\zeroj = \ran(h)$ is $\Sigma^0_0$ definable
in the parameter $\sigma$ by the formula $\varphi(n) \equiv
x <_{\sigma(g(n))} y$.
There will only exist a $v$ such that $v \in B_{2n}$ if $n \in \ran(h)$, but
when there is, the cofinite coalitions property ensures that $x <_{f(v)} y$.
$\mathcal{S}$ is thus a computable society all of whose non-dictatorial social
welfare functions compute $\zeroj$.
Nevertheless, this non-computability result is `easy' since it only requires
coding a single jump. A natural question is thus whether we can obtain more
precise degree-theoretic information about the complexity of non-dictatorial
social welfare functions.


\section{Conclusion and further work}
\label{sec:conclusion}

The results presented in this paper initiate the reverse mathematics of social
choice theory. In doing so, they demonstrate both the suitability of reverse
mathematics as a framework in which to assess the effectivity of theorems from
social choice theory, and the fruitfulness of social choice theory as a source
for reverse mathematical results.
It is straightforward within the present setting to define additional types of
collective choice rules for countable societies, allowing further theorems like
Sen's liberal paradox \cite{Sen1970} or the Gibbard--Satterthwaite theorem
\cite{Gibbard1973, Satterthwaite1975} to be formalised in $\lang_2$, and their
reverse mathematical status to be investigated.
The latter theorem, which concerns strategic voting and the manipulability of
elections, is a classical impossibility result from the 1970s. Like Arrow's
theorem, it has a corresponding possibility theorem when the set of voters is
infinite \cite{PaznerWesley1977}.
Finally, the equivalence between $\FPT$ and arithmetical comprehension shows
that the existence of non-computable sets is essential to proving the existence
of non-dictatorial social welfare functions. On the one hand, this is a far
weaker notion of non-constructivity than that measured by equivalences to choice
principles over $\ZF$. On the other, it shows that we cannot in general hope for
computable rules for social decision-making in countably infinite societies,
even for countable societies whose coalitions do not include all computable sets
of voters.


\subsection*{Acknowledgements}
\label{sec:acknowledgements}
%
%
%
%
%
%

I would like to thanks the referees for their helpful suggestions, as well as
many colleagues who provided feedback on earlier versions of this paper:
Marianna Antonutti Marfori,
Peter Cholak,
Walter Dean,
Damir Dzhafarov,
Peter Hammond,
Hannes Leitgeb,
Toby Meadows,
H.\ Reiju Mihara,
Carl Mummert,
Arianna Novaro, and
Marcus Pivato.


\bibliographystyle{plain}
\bibliography{research}

\end{document}